\documentstyle[a4,11pt,amssymb]{article}
\hoffset=-4mm
\newenvironment{proof}{{\sc Proof of}}{\raisebox{.8ex}{\framebox[2mm][b]{}}}
\newtheorem{theorem}{\large\bf Theorem}

\newtheorem{lemma}{\large\bf Lemma}

\newtheorem{corollary}{\large\bf Corollary}

\def\today{\number\day\space\ifcase\month\or
January\or February\or March\or April\or May\or June\or July\or August\or
September\or October\or November\or December\fi
\space\number\year}
\begin{document}
\title{ Edgeworth expansions in operator form }
\author {Zbigniew S. Szewczak\footnote{Nicolaus Copernicus
University, Faculty of Mathematics and Computer Science, ul.  Chopina
12/18, 87-100 Toru\'n, Poland, e-mail: zssz@mat.uni.torun.pl}}
\date{\today}\maketitle
\begin{abstract}
An operator form of asymptotic expansions for Markov chains is
established. Coefficients are given explicitly. Such expansions
require a certain modification of the classical spectral method. They prove to be
extremely useful within the context of large deviations.

\vskip 2pt\noindent
{\em Key words:} Asymptotic expansions, large  deviations,
Perron-Frobenius theorem, transition probability function.
\vskip 2pt\noindent
{\em Mathematics Subject Classification (2000):} 60F05, 60F10, 60J10, 47N30.
\end{abstract}

\section{ Introduction }

 Let $\{ \xi_k\}_{k\in {\Bbb Z}_{+}}$ be a homogeneous Markov chain
defined on a probability space $(\Omega, {\cal F}, {\rm P})$.
Denote by ${\Bbb S}$ and ${\cal S},$ respectively, the phase-space
and its $\sigma-$algebra of measurable subsets. Further, denote by
$P(x,A),\ x\in{\Bbb S},\ A\in{\cal S}$ the transition probability
kernel of the chain. It means that for each $A\in {\cal S},\
P(x,A)$ is a non-negative measurable function on ${\Bbb S}$ while
for each $x\in{\Bbb S},\ P(x,A)$ is a probability measure on
${\cal S}.$ In what follows we assume that the chain is uniformly
ergodic. So, there exists a stationary distribution denoted by
$\pi.$

Consider the sequence of random variables
$X_0=f(\xi_0),\ldots,X_n=f(\xi_n)$ determined by a measurable
function $f\!\!:{\Bbb S}\to{\Bbb R}.$ In what follows we assume
that
\begin{equation}
\label{sigma}
 \sigma^2={\rm E}_{\pi}[X_0^2] + 2\sum_{n=1}^{\infty} {\rm E}_{\pi}[X_0X_{n}]>0.
\end{equation}

There exists a huge literature concerning the limit theorems for
successive sums $ S_n=\sum_{i=1}^n X_i,\quad n=1,2,\ldots$. For
our purposes, it is enough to keep in mind only the works of S.
Nagaev (1957) and (1961) and the monograph by Sirazhdinov and Formanov (1979).
Despite the theory of limit theorems is well developed, some
settings seem to be set aside. For example, in Szewczak (2005)
it was shown that the Cram\'er method of conjugate distributions
assumes a special form of the local limit theorem that was not
considered before. The case studied in Szewczak (2005) concerns Markov
chains with a finite number of states. It worth noting that the
large deviation theorems, established there, proved to be very
useful in statistics of Markov chains (see A. Nagaev (2001) and (2002)).
The mentioned form of the local limit theorem means
the weak convergence of the measures
\begin{equation}\label{ea1}
Q_x^{(n)}(A\times B) = \sigma\sqrt{2\pi n}{\rm P}_{x}^{(n)}
\left[\,X_1+\ldots+X_n\in A,\,\xi_n\in B\,\right],
\end{equation}
where
\[{\rm P}_{x}^{(n)}(\xi_1\in A_1,\dots,\xi_n\in A_n)=\int_{A_1}\! P(x,{\rm
d}x_1)\int_{A_2}\! P(x_1,{\rm d}x_2)\dots\int_{A_n}\!
P(x_{n-1},{\rm d}x_n),\] $A_k\in{\cal S},\ k=1,\dots,n,\ B\in{\cal
S},\ A\in{\cal B}({\Bbb R})$ and $x\in{\Bbb S}.$

Define the linear operators

\begin{equation}\label{ea2}
({\bf K}_n g)(x)  = \int P(x, {\rm d}x_1) \cdots \int
P(x_{n-1},{\rm d}x_n) g(x_n) K_n(x_1,\ldots, x_n);\quad  g\in
L^{\infty}(\mu),
\end{equation}
where  $K_n,$ $n=1,2,\ldots,$ are measurable kernels, and
$L^{\infty}(\mu)$ is the Banach space of measurable functions
equipped with the  essential supremum norm
\[ \bigl\bracevert g \bigl\bracevert ={\rm ess}\sup_x\vert g(x)\vert =
\inf \{a;\, \mu\{x;\, \vert g(x)\vert > a \} = 0\}, \]  $\mu$ is
the initial distribution, i.e. $\mu(A)={\rm P}[\,\xi_0\in A\,],\,$
$A\in{\cal S}.$

Various probability  measures of interest can be represented as a
set  indexed family of the operators (\ref{ea2}). If one  puts
\[K_{n,A}(x_1,\ldots,x_n)= \sigma\sqrt{2\pi
n}I_A(f(x_1)+\cdots+f(x_n)),\quad g(x_n)= I_B(x_n),\, A\in{\cal
B}({\mathbb R}),
\]
then (\ref{ea1}) takes the form
\begin{equation}
\label{ea3}
Q_x^{(n)}(A\times B) = ({\bf K}_{n,A}I_B)(x).
\end{equation}
Similarly,
\begin{equation}
\label{ea4} {\rm P}_{x}[\frac{X_1+\cdots+X_n}{\sigma\sqrt{n}}\in
A, \xi_n\in B\,] = ({\bf K}_{n,A}g)(x)
\end{equation}
provided
\[K_{n,A}(x_1,\ldots,x_n)
=
I_A\left(\frac{f(x_1)+\cdots+f(x_n)}{\sigma\sqrt{n}}\right),\quad
g(x_n)= I_B(x_n).
\]

When $x$ is fixed  the weak convergence of the measures
(\ref{ea3}) (or (\ref{ea4}))  means a form of the classical local
limit (or central limit theorem). Naturally, we expect that the
measures (\ref{ea3}) weakly converge to $\lambda\times\pi$ while
(\ref{ea4}) converge to $\nu\times\pi$ where $\lambda$ is the
Lebesgue measure on ${\cal B}({\mathbb R})$ and
\[\nu(A)={{1}\over{\sqrt{2\pi}}}
\int_Ae^{-{{u^2}\over{2}}}{\rm d}u.\] Such statement can be
embedded into the following scheme of convergence.

Define
\begin{equation}
\label{epc}
\Vert {\bf K}\Vert_{+}=
\sup_{\{g\geq 0;\,g\in L^{\infty}(\mu), \,\bigl\bracevert
g\bigl\bracevert\leq 1\}}\!\!\!\! \bigl\bracevert {\bf K}g\bigl\bracevert.
\end{equation}
Consider a  family of sequences $\{{\bf K}_{n,A}\},$ $A\in{\cal
A}\subset {\cal B}(\mathbb R).$ We say that a sequence ${\bf
K}_{n,A}$ is $L_{\cal A}^{\infty}(\mu)$-strongly convergent to
${\bf K}_A$ if
\begin{equation}
\label{da2}
\sup_{A\in{\cal A}}\Vert {\bf K}_{n,A} -  {\bf K}_A\Vert_{+}\to 0
\qquad\mbox{as}\qquad n\to\infty.
\end{equation}
Let ${\cal A}=\{z\in{\mathbb R}\,\vert\,(-\infty,z\sigma)\}.$ If
the sequence of operators is defined as in (\ref{ea4}) then the
limit operator in (\ref{da2}) has the form
\[({\bf K}_Ag)(x)=\psi(x)\nu(A)\int_{\Bbb S}g(s)\mu( {\rm d} s)\]
where $\psi(x)\equiv 1.$ This fact is formally more general than
e.g. Th. 2.2 in Nagaev (1957) though its proof does not require
serious efforts. It is of much greater interest to establish the
{\em operator form} of the asymptotic expansions for the sequence
$\{{\mathbf K}_{n,z}\}$ determined by the kernels
\[K_{n,z}(x_1,\ldots,x_n) = I_{(-\infty, z)}
\left({{f(x_1)+\cdots+f(x_n)}\over{\sigma\sqrt{n}}}\right),\
z\in\Bbb R.\]

Such asymptotic expansions is basic goal of the present paper. The
paper is organized as follows.  In Section 2 the main results are
stated. In Section 3 a new estimate for the so-called
characteristic operator in the neighborhood of zero is established
(Cf. Lemma 1.6 in Nagaev (1961)).
The proofs are given  in Section 4.

\section{ The main results}

\noindent

In order to state the main results of the paper we have to
introduce the indispensable notation. We are going to establish an
asymptotic expansion  of the form
\begin{equation}
\label{eb1}
\Vert {\bf K}_{n,z} -\sum_{m=0}^{k-2} n^{-{{m}\over{2}}}{\bf A}_{m,z}
\Vert_{+} = \, o(n^{-{{k-2}\over{2}}}),
\end{equation}
where ${\bf A}_{m,z}$ are linear operators defined on
$L^{\infty}(\mu),\ m=0,1,\ldots,\ z\in {\mathbb R}.$ The operators
${\bf A}_{m,z}$ are expressed through the Hermite polynomials
$H_k$ and certain derivatives  of the  so-called characteristic
operator
\[ \hat{\bold P}(\theta)(g)(x) = \int e^{i\theta f(y)} g(y) P(x, {\rm d}y), \]
where $g\in L^{\infty}(\mu).$ More precisely, let
$\lambda(\theta)$ be the principal eigenvalue of $\hat{\bold P}(\theta)$
and $\hat{\bold P}_1(\theta)$ be the projection on the
eigenspace corresponding to $\lambda(\theta).$ Assume that
$\hat{\bf P}(\theta)$ is $k$-times strongly differentiable at
$\theta=0$ and ${\bf P}=\hat{\bf P}(0)$ is $L^{\infty}$-regular
(or primitive),  i.e. there exist $C>0$ and $\gamma,\,$
$0\leq\vert\gamma\vert<1,\,$ such that
\begin{equation}
\label{ae81}
\bigl\bracevert {\bold P}^n g - {\bold \Pi} g \bigl\bracevert \leq C
\vert\gamma\vert^n \bigl\bracevert g \bigl\bracevert,\qquad\qquad
g\in L^{\infty}(\mu),
\end{equation}
where \[({\bold \Pi}g)(x) = \psi(x)\int_{\Bbb S}g(s)\mu( {\rm d}
s)=\psi(x){\rm E}_{\pi}[g]\] (Cf. Gudynas (2000)).
Then $\hat{\bf P}_1(\theta)$ and $\ln{\lambda}(\theta)$ admit the following
MacLaurin expansions:
\[ \hat{\bold P}_1(\theta) = \sum_{m=0}^k {{(i\theta)^m}\over{m!}}
\hat{\bold P}_1^{(m)} +
o(\vert\theta\vert^k),\qquad\mbox{and}\qquad \ln{\lambda(\theta)}
=  \sum_{m=0}^k {{(i\theta)^m}\over{m!}}\gamma_m +
o(\vert\theta\vert^k).
\]
Here, the operators $\hat{\bold P}_1^{(m)}$ can be explicitly
expressed in terms of ${\bold P}$ and ${\bold \Pi}$ (see Lemma
\ref{al6}). The coefficients  $\gamma_m,$ $m=0,1,\ldots$ are
called {\em cumulants}. In what follows we assume $\gamma_1={\rm
E}_{\pi}[f]=0$ thus $\gamma_2=\sigma^2,$ where $\sigma^2$ is
defined by (\ref{sigma})
and $\gamma_3=\mu_3$ is defined
in Lemma 1.2 in Nagaev (1961).
Let $\frak N$ and $\frak n$ denote
the distribution function and the density function of the standard
normal law. Introduce the operators defined on
$L^{\infty}(\mu):\,$ ${\bf A}_{0,z} = {\mathfrak N}(z){\bf \Pi},\,
{\bf A}_{\nu,z}=\sum_{j=0}^{\nu} a_j(z){\hat{\bold P}_1^{(j)}},$
where
\[a_j(z) =- {\mathfrak n}(z)\!\!\!\!\!
\sum_{(k_1,k_2,\ldots,k_{\nu-j})\in {\cal K}_{\nu-j}} \!\!\!\!\!
a_{j,\nu-j}H_{\nu-1+2\sum\limits_{i=1}^{\nu-j} k_i}(z),\quad
a_{\nu}= - {\mathfrak n}(z)H_{\nu-1},
\]
\[a_{j,\nu-j} = {{1}\over{{j}!\sigma^{j}}}
\prod_{m=1}^{\nu-j} {{1}\over{k_m!}}
\Bigl({{\gamma_{m+2}}\over{(m+2)!\sigma^{m+2}}}\Bigl)^{k_m}\!\!\!,
\]
and ${\cal K}_m = \{(k_1,\ldots,k_m)\,;\, \sum_{i=1}^{m} ik_i=
m,\, k_i\geq 0, i=1,\ldots,m \}$. Thus, the operators
${\bf A}_{\nu,z}$ are well-defined provided $\hat{\bf P}(\theta)$ is
$k$-times strongly differentiable at $\theta=0$ and $\sigma>0$.

Let $r(\theta)$ be the spectral radius of $\hat{\bf P}(\theta)$.
It is well known that $r(\theta)$ inherits many principal
properties of the characteristic functions. In order to establish
asymptotic expansions (\ref{eb1})  we have to assume that
\begin{equation}
\label{eslat}
r(\theta)<1,\, \theta\neq 0,\qquad\mbox{and}\qquad
\limsup_{\vert\theta\vert\to\infty}  r(\theta)<1.
\end{equation}

The second inequality in (\ref{eslat}) is analogous to the
well-known Cram\'er condition (C). As to the first one it
guarantees that the distributions of $\sum_{i=1}^n X_i$ for all
sufficiently large $n$ is non-lattice.

The operator form of asymptotic expansions implies such properties
of the considered Markov chain as strong differentiability of
$\hat{\bf P}(\theta),$ primitiveness  and  (\ref{eslat}). Of
course, one could simply assume that these properties take place.
Another way is to give a simply verified condition that guarantees
these properties. As such  we  take the following \vskip
2pt\noindent
{Condition $(\Psi)$}:\\
{\it there exist $\alpha>0$ and $\beta<\infty$ such that for every
Borel set $A$ of a positive measure $\mu$ we have
$\alpha\mu(A)\leq P(x,A) \leq \beta\mu(A)$  for $\mu$-a.a.
$\,x\in{\Bbb S}.$}

\vskip 2pt\noindent

This condition enables us to verify the required properties by the
initial distribution $\mu$. For example if Condition $(\Psi)$ is
fulfilled then (\ref{eslat}) takes place provided
$\mu_f=\mu\!\circ\! f^{-1}$ is non-lattice and

\begin{equation}
\label{ecra}
\limsup_{\vert\theta\vert\to\infty}\vert
{\widehat\mu_f}(\theta)\vert < 1,
\end{equation}
where $\widehat{\mu}_f=\int e^{i\theta f(y)}\mu({\rm d}y)$.
Moreover, if $\int\vert f(y)\vert^k\mu({\rm d}y)<\infty$ then
$\hat{\bf P}(\theta)$ is $k$-times strongly differentiable. It
should be noted (see the proof of Lemma 3.1 in Jensen (1991))
that Condition $(\Psi)$ implies $\sigma^2=\gamma_2>0$.

Now we, are able to state the main results.

\begin{theorem}\quad
\label{tn2} Let Condition $(\Psi)$ is fulfilled. If $\int \vert
f(x)\vert^k\mu({\rm d}x)<\infty,$ $k>3,$ and $\mu_f$ satisfies
(\ref{ecra}) then (\ref{eb1}) holds.
\end{theorem}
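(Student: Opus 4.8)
The plan is to reduce the operator statement to a Fourier--inversion argument driven by the spectral decomposition of the characteristic operator $\hat{\mathbf{P}}(\theta)$, exactly as in the scalar Nagaev method but carried out at the level of the positivity norm $\|\cdot\|_{+}$. First I would record that, by the remarks preceding the theorem, Condition $(\Psi)$ together with the moment hypothesis $\int|f|^{k}\,\mu(\mathrm{d}x)<\infty$ and (\ref{ecra}) secures every analytic prerequisite: $\hat{\mathbf{P}}(\theta)$ is $k$-times strongly differentiable at $\theta=0$, $\mathbf{P}=\hat{\mathbf{P}}(0)$ is primitive in the sense of (\ref{ae81}), and the spectral bound (\ref{eslat}) holds. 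Consequently, for $\theta$ in a fixed neighborhood $|\theta|\le\delta$ of the origin the perturbation theory of the isolated simple principal eigenvalue yields the splitting $\hat{\mathbf{P}}(\theta)^{n}=\lambda(\theta)^{n}\hat{\mathbf{P}}_1(\theta)+\hat{\mathbf{R}}(\theta)^{n}$, where $\hat{\mathbf{R}}(\theta)$ has spectral radius strictly below $|\lambda(\theta)|$, so that $\|\hat{\mathbf{R}}(\theta)^{n}\|_{+}$ decays geometrically uniformly in such $\theta$.

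The second step is the inversion itself. For $g\ge 0$ with $\bigl\bracevert g\bigl\bracevert\le 1$, using $\mathbb{E}_{x}[e^{i\theta S_{n}}g(\xi_{n})]=(\hat{\mathbf{P}}(\theta)^{n}g)(x)$, I would apply an Esseen-type smoothing inequality in the norm $\|\cdot\|_{+}$ to bound $\|\mathbf{K}_{n,z}-\sum_{m=0}^{k-2}n^{-m/2}\mathbf{A}_{m,z}\|_{+}$ by a boundary term of order $1/T_{n}$ plus the integral of $\|\hat{\mathbf{P}}(t/(\sigma\sqrt n))^{n}-(\text{transform of the expansion})\|_{+}/|t|$ over $|t|\le T_{n}$, the cutoff $T_{n}$ being chosen with $n^{(k-2)/2}=o(T_{n})$ so that the boundary term is negligible. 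On the range $|\theta|=|t|/(\sigma\sqrt n)\le\delta$ I would insert the spectral splitting; the principal contribution $\lambda(t/(\sigma\sqrt n))^{n}\hat{\mathbf{P}}_1(t/(\sigma\sqrt n))$, combined with $\gamma_{1}=0$, $\gamma_{2}=\sigma^{2}$, gives $\lambda(t/(\sigma\sqrt n))^{n}=e^{-t^{2}/2}\exp\bigl(\sum_{m\ge 3}(it)^{m}\gamma_{m}n^{1-m/2}/(m!\,\sigma^{m})+o(1)\bigr)$. Expanding this exponential and the companion series $\hat{\mathbf{P}}_1(\theta)=\sum_m (i\theta)^{m}\hat{\mathbf{P}}_1^{(m)}/m!+o(|\theta|^{k})$ in powers of $n^{-1/2}$, and using that the inverse transform of $(it)^{j}e^{-t^{2}/2}$ is a Hermite polynomial times $\mathfrak{n}(z)$, reproduces exactly the coefficients $a_{j}(z)$ and hence the operators $\mathbf{A}_{\nu,z}=\sum_{j}a_{j}(z)\hat{\mathbf{P}}_1^{(j)}$ and $\mathbf{A}_{0,z}=\mathfrak{N}(z)\mathbf{\Pi}$ with the stated $a_{j,\nu-j}$.

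The third step is the estimation of the error, split into three regimes and measured throughout in $\|\cdot\|_{+}$. On $|\theta|\le\delta$ the Taylor remainder of $\lambda(\cdot)^{n}\hat{\mathbf{P}}_1(\cdot)$ is controlled by the new near-zero estimate of Section 3 (cf.\ Lemma 1.6 in Nagaev (1961)), which, after integration against the surviving Gaussian factor, is $o(n^{-(k-2)/2})$; the remainder $\hat{\mathbf{R}}(\theta)^{n}$ on the same range is geometrically small. On the compact band $\delta\le|\theta|\le M$ the first inequality in (\ref{eslat}), $r(\theta)<1$ for $\theta\ne 0$, gives $\sup r<1$ and hence $\|\hat{\mathbf{P}}(t/(\sigma\sqrt n))^{n}\|_{+}\le\rho^{n}$. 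On the genuine Cram\'er range $|\theta|\ge M$ the second inequality in (\ref{eslat}), furnished by (\ref{ecra}) under Condition $(\Psi)$, again yields geometric smallness. Integrated against $1/|t|$ over $|t|\le T_{n}$ these contribute at most $T_{n}\rho^{n}\to 0$.

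I expect the main obstacle to be the passage to the positivity norm $\|\cdot\|_{+}$: the classical Edgeworth machinery estimates scalar characteristic functions, whereas here the smoothing inequality, the termwise integration of the spectral expansion, and the control of both $\hat{\mathbf{P}}_1(\theta)$ and $\hat{\mathbf{R}}(\theta)^{n}$ must be made uniform over all admissible $g\ge 0$ and over $z\in\mathbb{R}$ simultaneously, with constants independent of both. The near-zero operator estimate of Section 3 is precisely the device that renders the leading Taylor remainder uniform in this stronger norm, and fitting it to the truncation scale $T_{n}$ so that the three regimes combine to $o(n^{-(k-2)/2})$ is the delicate bookkeeping at the heart of the argument.
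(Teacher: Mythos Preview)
Your overall architecture coincides with the paper's: Esseen smoothing, near-zero control via the operator expansion of Section~3 (Theorem~\ref{al3}), and large-$|\theta|$ control via the spectral-radius bound under Condition~$(\Psi)$ (Lemma~\ref{lna2}). There is, however, one concrete step that fails as written. When you form the Esseen integrand
\[
\bigl\|\hat{\mathbf P}(t/(\sigma\sqrt n))^{n}-\text{(transform of }\textstyle\sum_{m}n^{-m/2}\mathbf A_{m,z})\bigr\|_{+}\big/|t|,
\]
the two transforms do \emph{not} agree at $t=0$: the former equals $\mathbf P^{n}g$ while the latter equals $\mathbf\Pi g$. Equivalently, in your spectral splitting the residual $\hat{\mathbf R}(\theta)^{n}$ at $\theta=0$ is $\mathbf P^{n}-\mathbf\Pi\neq 0$, so ``geometrically small in $n$'' does not by itself make $\hat{\mathbf R}(\theta)^{n}/|t|$ integrable near $t=0$, and the smoothing inequality cannot be applied directly.

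The paper repairs this by adjoining to the Edgeworth approximant $G_{gn}$ an auxiliary point mass $K_{gn}$ at $z=0$ of weight $(\mathbf P^{n}-\mathbf\Pi)g$, then compares $F_{gn}$ with $H_{gn}=G_{gn}+K_{gn}$ so that $\hat F_{gn}(0)=\hat H_{gn}(0)$; the correction $K_{gn}$ is afterwards absorbed via (\ref{ae81}) at cost $O(|\gamma|^{n})$. This is exactly why Theorem~\ref{al3} is stated with the extra term $(\mathbf P^{n}-\mathbf\Pi)$ on the left, and why Lemma~\ref{al8} isolates $\hat{\mathbf Q}_{n}(\theta)=\hat{\mathbf R}(\theta)^{n}-(\mathbf P^{n}-\mathbf\Pi)$ with the crucial estimate $\|\hat{\mathbf Q}_{n}(\theta)\|=O(\kappa^{n}|\theta|)$: once the constant piece is subtracted, what remains carries a factor $|\theta|$ and the Esseen integral converges. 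With this device inserted, your outline goes through and matches the paper's proof essentially line for line.
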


As in the case of asymptotic expansions for i.i.d. variables
(see Gnedenko and Kolmogorov, 1954, \S42,  Th. 2)
the following statement does not
require the condition (\ref{ecra}).

\begin{theorem}\quad
\label{t4} Let Condition $(\Psi)$ is fulfilled. If $\int \vert
f(x)\vert^3\mu({\rm d}x)<\infty$ and $\mu_f$ is non-lattice then
(\ref{eb1}) holds with $k=3$.
\end{theorem}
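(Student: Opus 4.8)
The plan is to run the spectral (characteristic-operator) method exactly as for Theorem~\ref{tn2}, the only substantive change occurring at the frequencies bounded away from the origin. First I would pass to the Fourier--Stieltjes transform of ${\bf K}_{n,z}$ in the variable $z$. Since
\[
(\hat{\bf P}(\theta)^n g)(x)={\rm E}_x\big[g(\xi_n)\,e^{i\theta(f(\xi_1)+\cdots+f(\xi_n))}\big],
\]
the substitution $\theta=t/(\sigma\sqrt n)$ identifies that transform with $\hat{\bf P}(t/(\sigma\sqrt n))^n$. An operator analogue of the Esseen smoothing inequality, measured in the norm $\Vert\cdot\Vert_+$ of (\ref{epc}), then gives
\[
\Vert{\bf K}_{n,z}-{\bf A}_{0,z}-n^{-1/2}{\bf A}_{1,z}\Vert_+\le\frac{1}{\pi}\int_{-T}^{T}\Big\Vert\frac{\hat{\bf P}(t/(\sigma\sqrt n))^n-\widehat{\bf E}_n(t)}{t}\Big\Vert_+{\rm d}t+\frac{C}{T},
\]
where $\widehat{\bf E}_n(t)$ is the transform of ${\bf A}_{0,z}+n^{-1/2}{\bf A}_{1,z}$ and $C$ bounds its (uniformly bounded) generating densities. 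The task is to show the right-hand side is $o(n^{-1/2})$; the decisive device is to take the smoothing parameter $T=b\sqrt n$ with $b$ \emph{fixed}, and to let $b\to\infty$ only \emph{after} $n\to\infty$.

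On the central band $|t|\le\delta\sqrt n$ I would use the perturbation decomposition valid in a neighbourhood of $\theta=0$: primitiveness (\ref{ae81}) gives $\hat{\bf P}(\theta)^n=\lambda(\theta)^n\hat{\bf P}_1(\theta)+\hat{\bf R}(\theta)^n$ with $\Vert\hat{\bf R}(\theta)^n\Vert_+\le C\rho^n$, $\rho<1$, uniformly there. Feeding in the MacLaurin expansions of $\ln\lambda(\theta)$ and $\hat{\bf P}_1(\theta)$ and using $\gamma_1=0,\ \gamma_2=\sigma^2,\ \gamma_3=\mu_3$ yields
\[
\lambda(t/(\sigma\sqrt n))^n=e^{-t^2/2}\Big(1+\frac{(it)^3\mu_3}{6\sigma^3\sqrt n}+o(n^{-1/2})\Big),
\]
which, combined with the first-order term $i\theta\,\hat{\bf P}_1^{(1)}$ of $\hat{\bf P}_1(\theta)$ and the identification $\hat{\bf P}_1^{(0)}={\bf \Pi}$, reconstructs precisely $\widehat{\bf E}_n(t)$, the explicit coefficients being those furnished by Lemma~\ref{al6}. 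The neighbourhood-of-zero estimate established in Section~3 controls the Taylor remainder so that this band contributes $o(n^{-1/2})$ to the integral; this is the step that consumes the three finite moments $\int|f|^3{\rm d}\mu<\infty$.

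The crux is the outer band $\delta\sqrt n\le|t|\le b\sqrt n$. Here $\theta=t/(\sigma\sqrt n)$ traverses the \emph{fixed} compact set $[\delta/\sigma,\,b/\sigma]$, which does not move with $n$ once $b$ is frozen. Under Condition~$(\Psi)$ the non-latticeness of $\mu_f$ delivers the first inequality in (\ref{eslat}), namely $r(\theta)<1$ for every $\theta\neq0$; since $\theta\mapsto r(\theta)$ is upper semicontinuous and $<1$ throughout this compactum, its maximum $q_b:=\sup_{\delta/\sigma\le|\theta|\le b/\sigma}r(\theta)$ there is $<1$. This produces a uniform geometric bound $\Vert\hat{\bf P}(\theta)^n\Vert_+\le C_b\,q_b^{\,n}$, and (the Edgeworth transform $\widehat{\bf E}_n$ being exponentially smaller) the band contributes at most $C_b q_b^{\,n}\log(b/\delta)$, negligible beside any power of $n$. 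Hence $\limsup_n n^{1/2}\Vert{\bf K}_{n,z}-{\bf A}_{0,z}-n^{-1/2}{\bf A}_{1,z}\Vert_+\le C/b$ for every $b$, and letting $b\to\infty$ gives the asserted $o(n^{-1/2})$. Because each fixed $b$ keeps $\theta$ inside a bounded interval, the behaviour of $r(\theta)$ as $|\theta|\to\infty$---the Cram\'er-type second inequality in (\ref{eslat}), equivalently (\ref{ecra})---is never invoked; this is exactly the i.i.d.\ mechanism of Gnedenko and Kolmogorov, \S42, Th.~2.

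The step I expect to be the main obstacle is promoting the pointwise inequality $r(\theta)<1$ to the uniform operator estimate $\Vert\hat{\bf P}(\theta)^n\Vert_+\le C_b q_b^{\,n}$ across the whole compact band. In the operator setting $\Vert\hat{\bf P}(\theta)^n\Vert_+$ is not merely $r(\theta)^n$, so one must manufacture, uniformly in $\theta$, a Doeblin-type (or resolvent) contraction after finitely many steps; the minorisation $\alpha\mu(A)\le P(x,A)\le\beta\mu(A)$ supplied by Condition~$(\Psi)$ is precisely what makes such a bound available with constants independent of $\theta$ on the band, and it is also what underlies the operator smoothing inequality in the $\Vert\cdot\Vert_+$ norm.
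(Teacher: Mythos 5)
Your proposal follows essentially the same route as the paper: an Esseen-type smoothing inequality in the $\Vert\cdot\Vert_{+}$ setting, the spectral perturbation expansion of $\hat{\bf P}(\theta)^n$ on the central band, and on the outer band a uniform geometric bound over a compact set of frequencies combined with a slowly growing cutoff (the paper takes $T=T_nr_n$ with $r_n\to\infty$; your ``fixed $b$, then $b\to\infty$'' is the same diagonal argument). The one step you flag as the main obstacle---promoting the pointwise inequality $r(\theta)<1$ to a uniform operator bound on the compact band---is exactly what the paper's Lemma~\ref{lna2} supplies in the explicit quantitative form
\[
\bigl\bracevert \hat{\bold P}^n(\theta)g\bigl\bracevert
\leq \left(\sqrt{1-{{\alpha^4}\over{2\beta}}
(1-\vert{\widehat\mu_f}(\theta)\vert^2)}\right)^{n-1}\!\!\!\!,
\]
so that only the continuity and non-latticeness of the scalar characteristic function $\widehat\mu_f$ are needed on the band, rather than any semicontinuity argument for the operator spectral radius.
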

In order to clarify the  specificity of the limit theorems given
in the operator form consider two examples. First, let $\{\xi_k
\}$ be a finite state Markov chain,  i.e. ${\mathbb
S}=\{1,\ldots,d\},$ $d\geq 3$. Denote by ${\mathbf P}$ the
transition matrix. The entries of ${\mathbf P}^{\nu}, \nu\geq 0,$
we denote by $p_{ij}^{(\nu)}, i,j\in{\mathbb S},$
$p_{ij}^{(0)}=\delta_{ij}$. For a real function $f$ on ${\mathbb
S}$ define the  matrix ${\bold P}^{(1)}$ with the elements
$f(j)p_{ij},$ $i,j\in{\mathbb S}$. The following statement is of
independent interest.
\begin{corollary}\quad
\label{c1}
Suppose that transition matrix ${\mathbf P}$ is strictly positive.
If $f(\xi_0)$ is non-lattice and $\sum_{k=1}^d  \pi_k f(k)=0$
then uniformly in $z\in\mathbb R$ the matrix
\[
({\rm P}[S_n< z\sigma\sqrt{n}\,;\,
\xi_n=j\,\vert\, \xi_0=i])_{i,j\in{\mathbb S}}
\]
is approximated by the matrix
\begin{equation}
\label{ee3}
{\mathfrak N}(z){\bold \Pi} + n^{-1/2}{\mathfrak n}(z)
({{{\mu}_3}\over{6{\sigma}^3}}(1-{z^2}){\mathbf \Pi}
- {{1}\over{\sigma}}\sum_{\nu\geq 0}
{{\bold \Pi}{\bold P}^{(1)}({\bold P}^{\nu} - {\bold \Pi})
+ ({\bold P}^{\nu} - {\bold \Pi})
{\bold P}^{(1)}{\bold \Pi}})
\end{equation}
with an error $o(n^{-1/2})$. Here,
\[{\bold \Pi}=\left(\begin{array}{llll}
\pi_1 & \pi_2 & \dots & \pi_d\\&\\
\dots & \dots & \dots & \dots\\&\\
\pi_1 & \pi_2 & \dots & \pi_d
\end{array}\right).\]
\end{corollary}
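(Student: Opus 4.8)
The plan is to obtain the Corollary as the finite-state specialization of Theorem \ref{t4}, after which the operators ${\bf A}_{0,z}$ and ${\bf A}_{1,z}$ are computed explicitly. First I would verify the hypotheses of Theorem \ref{t4}. Because ${\mathbf P}=(p_{ij})$ is strictly positive and ${\mathbb S}$ is finite, the stationary law $\pi$ has full support; taking $\mu=\pi$ one has $\alpha\,\mu(A)\le P(x,A)\le\beta\,\mu(A)$ for every Borel $A$, with $\alpha=\min_{i,j}p_{ij}/\pi_j>0$ and $\beta=\max_{i,j}p_{ij}/\pi_j<\infty$, so Condition $(\Psi)$ holds. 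The moment condition $\int|f|^3\,{\rm d}\mu<\infty$ is automatic on a finite space, $\mu_f$ is non-lattice by hypothesis, and $\sum_k\pi_k f(k)=0$ is exactly $\gamma_1={\rm E}_\pi[f]=0$. Theorem \ref{t4} therefore gives
\[
\Vert{\bf K}_{n,z}-{\bf A}_{0,z}-n^{-1/2}{\bf A}_{1,z}\Vert_{+}=o(n^{-1/2}).
\]

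Next I would pass from this operator bound to the matrix statement. By (\ref{ea4}) with $g=I_{\{j\}}$, the $(i,j)$ entry of the matrix of ${\bf K}_{n,z}$ is precisely ${\rm P}[S_n<z\sigma\sqrt n;\,\xi_n=j\,\vert\,\xi_0=i]$. Since each $I_{\{j\}}$ is nonnegative with unit norm, (\ref{epc}) yields $\vert[\,{\bf K}_{n,z}-{\bf A}_{0,z}-n^{-1/2}{\bf A}_{1,z}\,]_{ij}\vert\le\Vert{\bf K}_{n,z}-{\bf A}_{0,z}-n^{-1/2}{\bf A}_{1,z}\Vert_{+}$; in finite dimension the norm estimate is thus an entrywise $o(n^{-1/2})$ bound, uniform in $z$. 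It remains only to identify the two limit operators as matrices.

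The identification ${\bf A}_{0,z}={\mathfrak N}(z){\mathbf \Pi}$ is immediate and supplies the leading term. For ${\bf A}_{1,z}=a_0(z)\hat{\bf P}_1^{(0)}+a_1(z)\hat{\bf P}_1^{(1)}$ I would evaluate the coefficients of Section~2 at $\nu=1$. For $j=0$ the set ${\cal K}_1$ consists of the single tuple $(1)$, so $a_{0,1}=\gamma_3/(3!\sigma^3)=\mu_3/(6\sigma^3)$ and the Hermite index equals $2$; with $H_2(z)=z^2-1$ this gives $a_0(z)={\mathfrak n}(z)\,\mu_3(1-z^2)/(6\sigma^3)$, while $a_1(z)=-{\mathfrak n}(z)/\sigma$. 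As $\hat{\bf P}_1^{(0)}={\mathbf \Pi}$, the $j=0$ contribution is ${\mathfrak n}(z)\,\mu_3(1-z^2)/(6\sigma^3)\,{\mathbf \Pi}$, matching the first term inside the bracket of (\ref{ee3}).

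The crux is the $j=1$ term, namely the explicit matrix form of $\hat{\bf P}_1^{(1)}$ provided by Lemma \ref{al6}. Here I would use that $[\hat{\bf P}(\theta)]_{ij}=e^{i\theta f(j)}p_{ij}$, so that $\hat{\bf P}'(0)=i{\mathbf P}^{(1)}$ with ${\mathbf P}^{(1)}=(f(j)p_{ij})$. The eigenvalue $1$ of ${\mathbf P}$ is simple with eigenprojection ${\mathbf \Pi}$, and its reduced resolvent is ${\mathbf S}=\sum_{\nu\ge0}({\mathbf P}^\nu-{\mathbf \Pi})$; the telescoping identity $({\mathbf I}-{\mathbf P})\sum_{\nu=0}^{N}({\mathbf P}^\nu-{\mathbf \Pi})={\mathbf I}-{\mathbf P}^{N+1}\to{\mathbf I}-{\mathbf \Pi}$ confirms $({\mathbf I}-{\mathbf P}){\mathbf S}={\mathbf I}-{\mathbf \Pi}$ and ${\mathbf S}{\mathbf \Pi}={\mathbf \Pi}{\mathbf S}=0$. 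The first-order eigenprojection formula then gives $\hat{\bf P}_1^{(1)}={\mathbf \Pi}{\mathbf P}^{(1)}{\mathbf S}+{\mathbf S}{\mathbf P}^{(1)}{\mathbf \Pi}$, and multiplying by $a_1(z)=-{\mathfrak n}(z)/\sigma$ reproduces exactly the series $-({\mathfrak n}(z)/\sigma)\sum_{\nu\ge0}\bigl({\mathbf \Pi}{\mathbf P}^{(1)}({\mathbf P}^\nu-{\mathbf \Pi})+({\mathbf P}^\nu-{\mathbf \Pi}){\mathbf P}^{(1)}{\mathbf \Pi}\bigr)$ of (\ref{ee3}). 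I expect the principal obstacle to be exactly this matching: reconciling the abstract operators $\hat{\bf P}_1^{(m)}$ of Lemma \ref{al6} with the reduced-resolvent series while keeping careful track of the $\sigma$-normalisations and signs. Everything else is routine bookkeeping.
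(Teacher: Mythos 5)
Your proposal is correct and follows essentially the route the paper intends: the corollary is the finite-state specialization of Theorem \ref{t4}, with ${\bf A}_{0,z}={\mathfrak N}(z){\bf \Pi}$ and ${\bf A}_{1,z}=a_0(z){\bf \Pi}+a_1(z)\hat{\bf P}_1^{(1)}$ evaluated from the coefficient formulas of Section 2, and $\hat{\bf P}_1^{(1)}={\bf \Pi}{\bf P}^{(1)}{\bf E}+{\bf E}{\bf P}^{(1)}{\bf \Pi}$ exactly as computed in Lemma \ref{al6} (your reduced-resolvent identity ${\bf S}={\bf E}=\sum_{\nu\ge 0}({\bf P}^{\nu}-{\bf \Pi})$ is the same object obtained there by contour integration). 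The verification of Condition $(\Psi)$ with $\mu=\pi$ and the passage from the $\Vert\cdot\Vert_{+}$ bound to an entrywise bound via $g=I_{\{j\}}$ are exactly the routine steps the paper leaves implicit.
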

Another particular case of independent interest is covered by the
following statement.
\begin{corollary}\quad
\label{c2} Let ${\mathbb S}=[0,\ 1]$. Suppose that the transition
density $p(x,y)$ is such that $0<p_-\leq p(x,y)\leq p_+<\infty.$
If  $f(\xi_0)$ is non-lattice and $\int f(u)\pi({\rm d}u)=0$ then
the linear operator (\ref{ee3}) $L^{\infty}_{\cal A}$-strongly
approximates the operator
$g\mapsto {\rm E}_{x}[I_{[S_n<z\sigma\sqrt{n}]}g(\xi_n)\,]$ with
an error $o(n^{-1/2})$. Here,
\[({\bold \Pi}g)(x)=\int_{\Bbb S}g(s)\mu( {\rm d} s)\psi(x).\]
\end{corollary}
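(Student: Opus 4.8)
The plan is to deduce Corollary~\ref{c2} from Theorem~\ref{t4}, the only analytic input being the verification of Condition~$(\Psi)$; the rest is the explicit evaluation of the coefficient operators ${\bf A}_{0,z}$ and ${\bf A}_{1,z}$. First I would take $\mu$ to be Lebesgue measure on $[0,1]$, so that $P(x,A)=\int_A p(x,y)\,{\rm d}y$ and the two-sided bound $0<p_-\leq p(x,y)\leq p_+<\infty$ yields $p_-\mu(A)\leq P(x,A)\leq p_+\mu(A)$ for every Borel $A$; thus $(\Psi)$ holds with $\alpha=p_-$, $\beta=p_+$. As recorded after (\ref{ecra}), $(\Psi)$ then supplies $\sigma^2=\gamma_2>0$ and primitiveness (\ref{ae81}). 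The hypothesis $\int f\,{\rm d}\pi=0$ is exactly $\gamma_1=0$, non-latticeness of $\mu_f=\mu\circ f^{-1}$ is the assumption that $f(\xi_0)$ is non-lattice, and (being the $k=3$ non-lattice statement) Theorem~\ref{t4} needs neither (\ref{ecra}) nor more than a finite third moment $\int|f|^3\,{\rm d}\mu<\infty$. With these in hand all hypotheses of Theorem~\ref{t4} are met.

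Theorem~\ref{t4} then delivers (\ref{eb1}) with $k=3$, namely
\[
\Vert {\bf K}_{n,z}-{\bf A}_{0,z}-n^{-1/2}{\bf A}_{1,z}\Vert_{+}=o(n^{-1/2})
\]
uniformly in $z\in{\mathbb R}$. By the representation (\ref{ea4}), read for a general $g\in L^{\infty}(\mu)$ rather than $g=I_B$, the operator ${\bf K}_{n,z}$ is precisely $g\mapsto{\rm E}_{x}[I_{[S_n<z\sigma\sqrt{n}]}g(\xi_n)]$. Hence the displayed estimate is the asserted $L^{\infty}_{\cal A}$-strong approximation, and it remains only to show that ${\bf A}_{0,z}+n^{-1/2}{\bf A}_{1,z}$ coincides with the operator in (\ref{ee3}).

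The coefficient ${\bf A}_{0,z}={\mathfrak N}(z){\bf\Pi}$ is immediate. For ${\bf A}_{1,z}$ I would specialize the definitions of $a_j(z)$ to $\nu=1$. The term $j=0$ runs over ${\cal K}_1=\{(1)\}$, so $\sum k_i=1$, $a_{0,1}=\gamma_3/(6\sigma^3)$, and the Hermite index is $\nu-1+2=2$; using $H_2(z)=z^2-1$ and $\gamma_3=\mu_3$ this gives $a_0(z)={\mathfrak n}(z)\frac{\mu_3}{6\sigma^3}(1-z^2)$, which, applied to $\hat{\bf P}_1^{(0)}={\bf\Pi}$, produces the first summand of (\ref{ee3}). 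The term $j=1$ has $\nu-j=0$, the empty index set contributing $H_0=1$ and $a_{1,0}=1/\sigma$, whence $a_1(z)=-{\mathfrak n}(z)/\sigma$, the factor multiplying $\hat{\bf P}_1^{(1)}$.

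The step I expect to be the main obstacle is the identification of $\hat{\bf P}_1^{(1)}$. By Lemma~\ref{al6}, the first derivative of the eigenprojection is governed by the reduced resolvent (group inverse) $S=\sum_{\nu\geq 0}({\bf P}^{\nu}-{\bf\Pi})$ of $I-{\bf P}$ and equals
\[
\hat{\bf P}_1^{(1)}=\sum_{\nu\geq 0}\bigl({\bf\Pi}\,{\bf P}^{(1)}({\bf P}^{\nu}-{\bf\Pi})+({\bf P}^{\nu}-{\bf\Pi})\,{\bf P}^{(1)}{\bf\Pi}\bigr),
\]
where ${\bf P}^{(1)}g(x)=\int f(y)g(y)P(x,{\rm d}y)$ is the operator analogue of the matrix ${\bf P}^{(1)}$ of Corollary~\ref{c1}. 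Convergence of this series in $\Vert\cdot\Vert_{+}$ is exactly what primitiveness (\ref{ae81}) provides, since ${\bf P}^{\nu}-{\bf\Pi}$ decays geometrically; this, together with correctly matching the sign and normalization produced by Lemma~\ref{al6} to the series in (\ref{ee3}), is the delicate part. Multiplying by $a_1(z)=-{\mathfrak n}(z)/\sigma$ yields the second group of terms in (\ref{ee3}), and summing the two contributions completes the identification. Because the error in Theorem~\ref{t4} is uniform in $z$, so is the resulting approximation, giving the claimed $L^{\infty}_{\cal A}$-strong convergence with error $o(n^{-1/2})$.
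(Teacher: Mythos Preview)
Your proposal is correct and matches the paper's intended (though not explicitly written) derivation: Corollary~\ref{c2} is stated as a direct specialization of Theorem~\ref{t4}, with Condition~$(\Psi)$ verified from the two-sided density bound and the operators ${\bf A}_{0,z},\,{\bf A}_{1,z}$ read off from the general definitions together with the formula $\hat{\bf P}_1^{(1)}={\bf\Pi}{\bf P}^{(1)}{\bf E}+{\bf E}{\bf P}^{(1)}{\bf\Pi}$ supplied in the proof of Lemma~\ref{al6}. Your evaluations of $a_0(z)=\mathfrak{n}(z)\frac{\mu_3}{6\sigma^3}(1-z^2)$ and $a_1(z)=-\mathfrak{n}(z)/\sigma$ are accurate and reproduce (\ref{ee3}) exactly.
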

Note that the classical scalar form of the presented statement is:
\begin{equation}
\label{esae}
{\rm P}_{\pi}[S_n< z\sigma\sqrt{n}\,] - {\mathfrak N}(z)
= n^{-1/2}{\mathfrak n}(z){{{\mu}_3}\over{6{\sigma}^3}}(1-{z^2})
+ o(n^{-1/2})
\end{equation}
(see e.g. Th. 2 in Nagaev (1961)).
The corollaries show that the
operator form of asymptotic expansions is much more sensitive to
the initial conditions than the scalar one. It should be
emphasized that the spectral method suggested by S. Nagaev
(see e.g. Nagaev, 1957)
remains efficient under this new setting though
requires some modification. Furthermore, the cumbersome
calculations, that are typical for asymptotic expansions, can be
implemented using the package {\em Maple.} This power software
proved to be very efficient for such purposes.

\section{ Characteristic operator}

\noindent

Given $m\in\Bbb N$
let us define operator ${\bf P}^{(m)}g = {\bf P}f^mg$.
The following lemma is an extension of the well-known result due to S. Nagaev
(Cf. Nagaev, 1961, pp. 71--75).
\begin{lemma}\quad
\label{al8}
Suppose that (\ref{ae81}) holds
and ${\bold P}^{(1)}$ is a bounded endomorphism.
Then there exists $\xi = \xi(C,\vert\gamma\vert,\Vert{\bold P}^{(1)}\Vert)$
such that for $\vert\theta\vert<\xi$,
\begin{equation}
\label{aae8}
\hat{\bold P}^n(\theta) =
\lambda^n(\theta)\hat{\bold P}_1(\theta)
+ \hat{\bold Q}_n(\theta) + ({\bold P}^n - {\bold \Pi})
\end{equation}
and $\vert\lambda(\theta)-1\vert<\delta$,
where
$\Vert \hat{\bold P}_1(\theta) - {\bold \Pi}\Vert = O(\mid\theta\mid),\,
\Vert\hat{\bold Q}_n(\theta)\Vert
= O(\kappa^n
\mid\theta\mid),$
$ \kappa = {1\over{3}} + {2\over{3}}\vert\gamma\vert,$
$\>\delta = {1\over{3}} - {1\over{3}}\vert\gamma\vert.$
\end{lemma}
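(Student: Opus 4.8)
The plan is to build the decomposition (\ref{aae8}) from the Riesz spectral calculus for $\hat{\bold P}(\theta)$, treating it as a small perturbation of ${\bold P}$ and comparing everything with the value $\theta=0$. The starting point is that (\ref{ae81}) forces the spectrum of ${\bold P}$ to split as $\{1\}\cup\mbox{spec}({\bold P}-{\bold \Pi})$, with ${\bold P}-{\bold \Pi}$ of spectral radius at most $\vert\gamma\vert$, since $\Vert({\bold P}-{\bold \Pi})^n\Vert=\Vert{\bold P}^n-{\bold \Pi}\Vert\leq C\vert\gamma\vert^n$. I would separate these two pieces by the circles $\Gamma_1=\{\vert z-1\vert=\delta\}$ and $\Gamma_2=\{\vert z\vert=\kappa\}$ and set
\[
\hat{\bold P}_1(\theta)=\frac{1}{2\pi i}\oint_{\Gamma_1}(z-\hat{\bold P}(\theta))^{-1}\,{\rm d}z,
\qquad
\hat{\bold R}(\theta)=\frac{1}{2\pi i}\oint_{\Gamma_2}z\,(z-\hat{\bold P}(\theta))^{-1}\,{\rm d}z,
\]
so that $\hat{\bold P}(\theta)=\lambda(\theta)\hat{\bold P}_1(\theta)+\hat{\bold R}(\theta)$ with $\lambda(\theta)$ the eigenvalue enclosed by $\Gamma_1$; raising to the $n$-th power gives $\hat{\bold P}^n(\theta)=\lambda^n(\theta)\hat{\bold P}_1(\theta)+\hat{\bold R}^n(\theta)$ with $\hat{\bold R}^n(\theta)=\frac{1}{2\pi i}\oint_{\Gamma_2}z^n(z-\hat{\bold P}(\theta))^{-1}\,{\rm d}z$. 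At $\theta=0$ this last integral reproduces ${\bold P}^n-{\bold \Pi}$, so I would put $\hat{\bold Q}_n(\theta)=\hat{\bold R}^n(\theta)-({\bold P}^n-{\bold \Pi})$, which is exactly the remainder in (\ref{aae8}).

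The specific constants $\kappa=\frac13+\frac23\vert\gamma\vert$ and $\delta=\frac13-\frac13\vert\gamma\vert$ are dictated by the requirement that the two circles separate the spectrum and admit clean, $n$-independent resolvent bounds. Using the splitting $(z-{\bold P})^{-1}=(z-1)^{-1}{\bold \Pi}+(z-({\bold P}-{\bold \Pi}))^{-1}(I-{\bold \Pi})$ together with the Neumann series $(z-({\bold P}-{\bold \Pi}))^{-1}=\sum_{k\geq0}({\bold P}-{\bold \Pi})^k z^{-k-1}$ and (\ref{ae81}), one bounds the second summand by $C/(\vert z\vert-\vert\gamma\vert)$ wherever $\vert z\vert>\vert\gamma\vert$. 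On $\Gamma_2$ one has $\vert z\vert-\vert\gamma\vert=\kappa-\vert\gamma\vert=\delta$ and $\vert z-1\vert\geq 1-\kappa=2\delta$; on $\Gamma_1$ one has $\vert z\vert\geq 1-\delta$, whence $\vert z\vert-\vert\gamma\vert\geq 2\delta$ and $\vert z-1\vert=\delta$. Thus $(z-{\bold P})^{-1}$ is uniformly bounded on both contours by a constant depending only on $C$, $\Vert{\bold \Pi}\Vert$ and $\delta$. Since $1-\delta>\kappa$ for $\vert\gamma\vert<1$, the disks $\{\vert z\vert\leq\kappa\}$ and $\{\vert z-1\vert\leq\delta\}$ are disjoint, so $\Gamma_2$ encircles exactly $\mbox{spec}({\bold P}-{\bold \Pi})$ while $\Gamma_1$ encircles only the eigenvalue $1$.

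Next I would pass to the perturbed operator. From $\vert e^{i\theta f(y)}-1\vert\leq\vert\theta\vert\,\vert f(y)\vert$ and the boundedness of ${\bold P}^{(1)}$ one gets $\Vert\hat{\bold P}(\theta)-{\bold P}\Vert\leq\vert\theta\vert\,\Vert{\bold P}^{(1)}\Vert$. Writing $(z-\hat{\bold P}(\theta))^{-1}=(z-{\bold P})^{-1}\sum_{k\geq0}[(\hat{\bold P}(\theta)-{\bold P})(z-{\bold P})^{-1}]^{k}$, this series converges uniformly on $\Gamma_1\cup\Gamma_2$ as soon as $\vert\theta\vert\,\Vert{\bold P}^{(1)}\Vert$ times the resolvent bound of the previous paragraph is below $1/2$; this inequality defines the threshold $\xi=\xi(C,\vert\gamma\vert,\Vert{\bold P}^{(1)}\Vert)$. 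On that range the perturbed resolvent exists and is uniformly bounded on both contours, which legitimises the definitions above, shows that $\Gamma_1$ still encloses a single simple eigenvalue $\lambda(\theta)$ with $\vert\lambda(\theta)-1\vert<\delta$, and, via $\hat{\bold P}_1(\theta)-{\bold \Pi}=\frac{1}{2\pi i}\oint_{\Gamma_1}(z-\hat{\bold P}(\theta))^{-1}(\hat{\bold P}(\theta)-{\bold P})(z-{\bold P})^{-1}\,{\rm d}z$, yields $\Vert\hat{\bold P}_1(\theta)-{\bold \Pi}\Vert=O(\vert\theta\vert)$.

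The crux is the bound $\Vert\hat{\bold Q}_n(\theta)\Vert=O(\kappa^n\vert\theta\vert)$, and here the contour representation pays off. From the resolvent identity,
\[
\hat{\bold Q}_n(\theta)=\frac{1}{2\pi i}\oint_{\Gamma_2}z^n\bigl[(z-\hat{\bold P}(\theta))^{-1}-(z-{\bold P})^{-1}\bigr]\,{\rm d}z
=\frac{1}{2\pi i}\oint_{\Gamma_2}z^n(z-\hat{\bold P}(\theta))^{-1}(\hat{\bold P}(\theta)-{\bold P})(z-{\bold P})^{-1}\,{\rm d}z,
\]
the factor $\vert z^n\vert=\kappa^n$ on $\Gamma_2$, the middle factor of size $\vert\theta\vert\,\Vert{\bold P}^{(1)}\Vert$, and the two uniformly bounded resolvents combine to give $\Vert\hat{\bold Q}_n(\theta)\Vert\leq\mbox{const}\cdot\kappa^{n+1}\vert\theta\vert=O(\kappa^n\vert\theta\vert)$, uniformly in $n$. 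Together with the estimates of the previous paragraph this establishes (\ref{aae8}) and all the stated norms. The main obstacle is not any single step but the bookkeeping that keeps every resolvent bound uniform in both $n$ and $\vert\theta\vert<\xi$; the clean identities $\kappa-\vert\gamma\vert=\delta$ and $1-\kappa=2\delta$ are exactly what make the constants explicit and independent of $n$.
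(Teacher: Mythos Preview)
Your argument is correct and follows essentially the same route as the paper: the same two Riesz contours, the same definition $\hat{\bold Q}_n(\theta)=\frac{1}{2\pi i}\oint_{\Gamma_2}z^n\bigl[(z-\hat{\bold P}(\theta))^{-1}-(z-{\bold P})^{-1}\bigr]\,{\rm d}z$, and the same resolvent-identity estimate producing the factor $\kappa^n\vert\theta\vert$. The paper simply cites Nagaev (1961) for the uniform resolvent bounds that you spell out via the splitting $(z-{\bold P})^{-1}=(z-1)^{-1}{\bold \Pi}+\sum_{k\geq0}({\bold P}^k-{\bold \Pi})z^{-k-1}$, and it records the explicit threshold $\xi=\frac{1}{2\Vert{\bold P}^{(1)}\Vert}\bigl(\frac{1-\vert\gamma\vert}{3(3+C)}\bigr)^{2}$.
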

\begin{proof} {\sc Lemma \ref{al8}}

Write
${\Gamma}_0 = \{\vert\zeta\vert = \kappa \},\,
{\Gamma}_1 = \{ \vert\zeta - 1 \vert = \delta \} $
and $D=\{\vert\zeta\vert\geq\kappa\}\cap\{\vert\zeta-1\vert\geq\delta\},$
where $\zeta\in \Bbb C$.
Denote by $\hat{\bold R}(\zeta, \theta)$ the resolvent
of $\hat{\bold P}(\theta)$ and set
${\bold R}(\zeta)=\hat{\bold R}(\zeta, 0)$.
Let $\xi = {{1}\over{2\Vert{\bold P}^{(1)}\Vert}}
\left({{1-\vert\gamma\vert}\over{3(3+C)}}\right)^2.$
Consequently for $\mid\theta\mid<\xi$, $\zeta\in D$
(see \S1 in Nagaev (1961)) we may define the projections
\begin{equation}
\label{esp} \hat{\bold P}_1(\theta) = {1\over{2\pi i}}
\oint_{\Gamma_1} \hat{\bold R}(\zeta, \theta) {\rm d}\zeta, \qquad
\hat{\bold P}_2(\theta) = {1\over{2\pi i}} \oint_{\Gamma_0}
\hat{\bold R}(\zeta, \theta) {\rm d}\zeta.
\end{equation}
Thus (\ref{aae8}) holds with
$\hat{\bold Q}_n(\theta) =
\hat{\bold P}^n(\theta)\hat{\bold P}_2(\theta)
- ({\bold P}^n - {\bold \Pi}). $
We see at once that
\[
\hat{\bold P}(\theta)\hat{\bold R}(\zeta, \theta) =
 -{\bold I} + \zeta\hat{\bold R}(\zeta, \theta)
\]
therefore,
\[
\hat{\bold P}^n(\theta)\hat{\bold R}(\zeta, \theta) =
-\sum_{k=1}^n (\hat{\bold P}(\theta))^{n-k}{\zeta}^{k-1} +
{\zeta}^n\hat{\bold R}(\zeta, \theta).
\]
So it easily seen that
\begin{eqnarray*}
\Vert\hat{\bold Q}_n(\theta)\Vert
& = &
\Vert {1\over{2\pi i}}\oint_{\Gamma_0}
\zeta^n(\hat{\bold R}(\zeta, \theta) - {\bold R}(\zeta)) {\rm d}\zeta \Vert \\
& \leq & {1\over{2\pi}}\int_0^{2\pi}\kappa^n {{ 2(3(3+C))^3 \Vert
\hat{\bold P}(\theta) - {\bold P}\Vert}\over { (1 -
\vert\gamma\vert)^2(6(3+C) - 1 + \vert\gamma\vert)}} \kappa {\rm
d}\phi = O(\kappa^n \mid\theta\mid).
\end{eqnarray*}
Similarly we have
$\Vert \hat{\bold P}_1(\theta) - {\bold  \Pi}\Vert = O(\mid\theta\mid).$
The proof is completed.
\end{proof}

The following lemma deals with the existence of the ``operator'' moments.
\begin{lemma}\quad
\label{al11}
If
\begin{equation}
\label{ena1} \lim_{L\to\infty} \bigl\bracevert \int_{\vert
f(y)\vert >L} \vert f(y)\vert^k P(x,{\rm d}y) \bigl\bracevert = 0
\end{equation}
then
\begin{equation}
\label{emac}
\hat{\bold P}(\theta) = \sum_{m=0}^k {{(i\theta)^m}\over{m!}}
{\bold P}^{(m)} + o(\vert\theta\vert^k),
\end{equation}
where  ${\bold P}^{(m)}$ are bounded for $0\leq m\leq k$.
\end{lemma}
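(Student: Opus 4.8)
The plan is to reduce the operator identity to the scalar Taylor expansion of the exponential applied pointwise inside the integral defining $\hat{\bold P}(\theta)$, and then to control the resulting remainder uniformly in $x$ (in the essential-supremum sense) and in $g$ over the unit ball. I would start from the elementary bound
\[
\Bigl| e^{it} - \sum_{m=0}^{k} \frac{(it)^m}{m!} \Bigr| \leq \min\Bigl( \frac{|t|^{k+1}}{(k+1)!},\ \frac{2|t|^{k}}{k!} \Bigr),\qquad t\in{\mathbb R},
\]
specialised to $t=\theta f(y)$. Taking $g\in L^{\infty}(\mu)$ with $\bigl\bracevert g\bigl\bracevert\le 1$ (so $|g(y)|\le 1$ for $\mu$-a.a. $y$) and subtracting the candidate expansion gives, for $\mu$-a.a. $x$,
\[
\Bigl|\Bigl(\hat{\bold P}(\theta)g - \sum_{m=0}^{k}\frac{(i\theta)^m}{m!}{\bold P}^{(m)}g\Bigr)(x)\Bigr| \leq \int \Bigl| e^{i\theta f(y)} - \sum_{m=0}^{k}\frac{(i\theta f(y))^m}{m!}\Bigr|\,P(x,{\rm d}y),
\]
so everything reduces to estimating this integral.

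Before estimating the remainder I would check that the moment operators ${\bold P}^{(m)}g={\bold P}f^mg$ are bounded for $0\le m\le k$. Since $|f|^m\le 1+|f|^k$, it suffices to bound $\int|f(y)|^kP(x,{\rm d}y)$; splitting at a truncation level $L$ and using (\ref{ena1}) gives $\int|f|^kP(x,{\rm d}y)\le L^k + \bigl\bracevert\int_{|f|>L}|f|^kP(x,{\rm d}y)\bigl\bracevert<\infty$ uniformly in $x$, whence each ${\bold P}^{(m)}$ is a bounded endomorphism of $L^{\infty}(\mu)$.

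For the remainder I would split the integral at the same level $L$. On $\{|f(y)|>L\}$ I use the bound $2|t|^{k}/k!$, which produces a factor $\int_{|f|>L}|f(y)|^kP(x,{\rm d}y)$ whose essential supremum over $x$ is below any prescribed $\varepsilon$ once $L=L(\varepsilon)$ is chosen large, by (\ref{ena1}); this piece is therefore $\le \frac{2}{k!}\varepsilon\,|\theta|^k$. On $\{|f(y)|\le L\}$ I use the bound $|t|^{k+1}/(k+1)!$ together with $|f|^{k+1}\le L^{k+1}$ there, giving a contribution $\le \frac{L^{k+1}}{(k+1)!}|\theta|^{k+1}$, i.e. one power of $\theta$ better.

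Combining the two pieces yields the conclusion: given $\varepsilon>0$, fix $L=L(\varepsilon)$ and then shrink $|\theta|$ so that $\frac{L^{k+1}}{(k+1)!}|\theta|^{k+1}\le\frac{\varepsilon}{k!}|\theta|^k$, obtaining
\[
\sup_{\bigl\bracevert g\bigl\bracevert\le 1}\bigl\bracevert \hat{\bold P}(\theta)g-\sum_{m=0}^{k}\frac{(i\theta)^m}{m!}{\bold P}^{(m)}g\bigl\bracevert \le \frac{3\varepsilon}{k!}\,|\theta|^k
\]
for all sufficiently small $|\theta|$, which is precisely the $o(|\theta|^k)$ assertion (\ref{emac}) in operator norm. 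The only delicate point — and the main obstacle — is that both the truncation estimate and the boundedness of the moment operators must hold uniformly in $x$ in the $\bigl\bracevert\cdot\bigl\bracevert$-sense; this is exactly what hypothesis (\ref{ena1}), stated with the essential-supremum norm rather than pointwise, is designed to provide, so once the split is arranged no further work is needed.
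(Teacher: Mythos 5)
Your argument is correct, but it reaches (\ref{emac}) by a genuinely different route from the paper's. The paper proves the expansion by establishing, order by order, the strong differentiability of $\theta\mapsto\hat{\bold P}(\theta)$: it writes the difference quotient of the $(k-1)$-st derivative minus the candidate $k$-th derivative as
\[
i^k \int e^{i\theta f(y)}g(y) f^k(y)\int_0^1 (e^{ihsf(y)} -1)\,{\rm d}s\, P(\,\cdot\,, {\rm d}y),
\]
splits the integral at a truncation level $L$ exactly as you do (obtaining ${{1}\over{2}}L^{k+1}\vert h\vert$ on $\{\vert f\vert\leq L\}$ and twice the tail moment on $\{\vert f\vert>L\}$, the latter controlled by (\ref{ena1})), concludes that the $k$-th strong derivative exists and equals $i^k{\bold P}^{(k)}$, and only then invokes Taylor's formula for the operator-valued map. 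You instead bypass differentiability entirely and estimate the Taylor remainder of $e^{it}$ pointwise under the integral via the classical bound $\min\bigl(\vert t\vert^{k+1}/(k+1)!,\ 2\vert t\vert^{k}/k!\bigr)$, closing the argument with the same truncation; your verification that each ${\bold P}^{(m)}$ is a bounded endomorphism (via $\vert f\vert^m\leq 1+\vert f\vert^k$ and a single split at $L$) is also sound and is a point the paper leaves implicit. What each approach buys: yours is more elementary and self-contained, needing no Taylor theorem for Banach-space-valued functions, and it makes transparent that (\ref{ena1}) is precisely a uniform-integrability condition in the essential-supremum sense; the paper's version yields slightly more, namely $k$-fold strong differentiability of $\hat{\bold P}(\theta)$ on a whole neighbourhood of the origin rather than just the expansion at $\theta=0$, a stronger conclusion that is tacitly relied upon later (for instance when derivatives of $\ln\lambda$ at nonzero points enter the remainder $W_k$ in the proof of Theorem \ref{al3}). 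For the lemma as stated, both arguments are complete.
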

\begin{proof} {\sc Lemma \ref{al11}}

Indeed, we have
\begin{eqnarray*}
h^{-1}(i^{(k-1)}\hat{\bold P}^{(k-1)}(\theta + h)g
- i^{(k-1)}\hat{\bold P}^{(k-1)}(\theta)g)  -
i^k \int e^{i\theta y}g(y) f^k(y) P(\,\cdot\,, {\rm d}y) \\
 =
i^k \int e^{i\theta f(y)}g(y) f^k(y)\int_0^1 (e^{ihsf(y)} -1){\rm
d}s P(\,\cdot\,, {\rm d}y).
\end{eqnarray*}
Now, choose $L$ be sufficiently large positive number.
Since,
\begin{eqnarray*}
\lefteqn {
\inf \{ K\, ;\, \mu\{x\, ;\, \vert
\int (if(y))^k\int_0^1(e^{ihsf(y)} - 1){\rm d}s P(x,{\rm d}y)\vert > K \} = 0 \} }\\
& & \leq
\inf \{ K\, ;\, \mu\{x\, ;\, \vert
\int_{\vert f(y)\vert\leq L}
 (if(y))^k\int_0^1(e^{ihsf(y)} - 1){\rm d}s P(x,{\rm d}y)\vert > K \} = 0 \} \\
& & \, +
\inf \{ K\, ;\, \mu\{x\, ;\, \vert
\int_{\vert f(y)\vert>L}
(if(y))^k\int_0^1(e^{ihsf(y)} - 1){\rm d}s P(x,{\rm d}y)\vert > K \} = 0 \}\\
& & \leq {{1}\over{2}}L^{k+1}\vert h\vert + 2\bigl\bracevert
\int_{\vert f(y)\vert >L} \vert f(y)\vert^k P(x,{\rm d}y)
\bigl\bracevert
\end{eqnarray*}
the lemma follows by the Taylor formula.
\end{proof}

The next lemma presents a series expansion for characteristic
projector.
\begin{lemma}\quad
\label{al6}
If a primitive operator ${\bf P}$ satisfies (\ref{emac}) then
\[ \hat{\bold P}_1(\theta) = \sum_{m=0}^k {{(i\theta)^m}\over{m!}}
\hat{\bold P}_1^{(m)} + o(\vert\theta\vert^k). \]
\end{lemma}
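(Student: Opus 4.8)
The plan is to derive the MacLaurin expansion of $\hat{\bold P}_1(\theta)$ directly from its contour-integral representation (\ref{esp}) combined with the analytic expansion of the resolvent $\hat{\bold R}(\zeta,\theta)$ in $\theta$. First I would write the resolvent identity
\[
\hat{\bold R}(\zeta,\theta) - {\bold R}(\zeta)
= {\bold R}(\zeta)\bigl({\bold P} - \hat{\bold P}(\theta)\bigr)\hat{\bold R}(\zeta,\theta),
\]
and iterate it so as to obtain the Neumann-type series
\[
\hat{\bold R}(\zeta,\theta)
= \sum_{j\geq 0} \bigl[{\bold R}(\zeta)\bigl({\bold P} - \hat{\bold P}(\theta)\bigr)\bigr]^j {\bold R}(\zeta),
\]
valid for $\zeta\in\Gamma_1$ once $|\theta|<\xi$, since on $\Gamma_1$ the factor $\Vert{\bold R}(\zeta)({\bold P}-\hat{\bold P}(\theta))\Vert$ is strictly less than one by the choice of $\xi$ in Lemma \ref{al8}.

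Next I would substitute the expansion (\ref{emac}) of Lemma \ref{al11}, namely
\[
{\bold P} - \hat{\bold P}(\theta)
= -\sum_{m=1}^k {{(i\theta)^m}\over{m!}}{\bold P}^{(m)} + o(|\theta|^k),
\]
into the Neumann series. Collecting all terms of equal total degree in $\theta$ then produces a polynomial in $(i\theta)$ of degree $k$ with operator coefficients, each coefficient being a finite sum of products of $\ {\bold R}(\zeta)\ $ and the moment operators ${\bold P}^{(m)}$. Integrating over $\Gamma_1$ and using (\ref{esp}) defines the operators $\hat{\bold P}_1^{(m)}$ explicitly through residue computations; the degree-zero term reproduces $\hat{\bold P}_1^{(0)} = {\bold \Pi}$, consistently with the estimate $\Vert\hat{\bold P}_1(\theta)-{\bold \Pi}\Vert = O(|\theta|)$ already obtained in Lemma \ref{al8}.

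The main obstacle will be controlling the remainder uniformly in $\zeta\in\Gamma_1$ so that the $o(|\theta|^k)$ error survives integration. The point is that the tail of the Neumann series and the $o(|\theta|^k)$ term in (\ref{emac}) must be bounded by a constant independent of $\zeta$ on the compact contour $\Gamma_1$; since $\Gamma_1$ stays at distance $\delta$ from the spectrum, $\Vert{\bold R}(\zeta)\Vert$ is uniformly bounded there, which is exactly what makes the termwise integration and the collection of powers of $\theta$ legitimate. Once this uniform bound is in place, dividing by $2\pi i$ and integrating each homogeneous term yields the claimed expansion with the coefficients $\hat{\bold P}_1^{(m)}$ expressed in terms of ${\bold P}$ and ${\bold \Pi}$, completing the proof.
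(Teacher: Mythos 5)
Your argument is essentially the paper's own proof: the author likewise expands the resolvent in a Neumann series, $\hat{\bold R}(\zeta,\theta)={\bold R}(\zeta)+\sum_{n\geq 1}{\bold R}(\zeta)\bigl(\sum_{m=1}^k{\bold P}^{(m)}{\bold R}(\zeta)\,(i\theta)^m/m!\bigr)^n+o(\vert\theta\vert^k)$, collects equal powers of $\theta$, and integrates over $\Gamma_1$ (using ${\bold R}(\zeta)={\bold \Pi}/(\zeta-1)+{\bold E}(\zeta)$ to evaluate the residues explicitly). The one thing to fix is a sign: with the paper's convention $\hat{\bold R}(\zeta,\theta)=(\zeta{\bold I}-\hat{\bold P}(\theta))^{-1}$, which is forced by (\ref{esp}), the second resolvent identity reads $\hat{\bold R}(\zeta,\theta)-{\bold R}(\zeta)={\bold R}(\zeta)\bigl(\hat{\bold P}(\theta)-{\bold P}\bigr)\hat{\bold R}(\zeta,\theta)$, so the minus sign in your version would flip the odd-order coefficients.
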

\begin{proof} {\sc Lemma \ref{al6}}

 Put, for short
${\bold E}(\zeta) =
\sum_{n\geq 0}
({\bold P}^n - {\bold \Pi})\zeta^{-n-1},$
and ${\bold E} = {\bold E}(1)$.
In view of (1.10) in Nagaev (1957) and (\ref{emac})
we obtain for $\vert\theta\vert<\xi$
\[ \hat{\bold R}(\zeta, \theta) = {\bold R}(\zeta) +
\sum_{n\geq 1} {\bold R}(\zeta)(\sum_{m = 1}^k {\bold P}^{(m)}
{\bold R}(\zeta){{(i\theta)^m}\over{m!}})^n
+ o(\vert\theta\vert^k). \]
Hence taking in the above coefficient at $i\theta$ and using (\ref{esp})
we get for $k=1$
\begin{eqnarray*}
\hat{\bold P}_1^{(1)}
& = & {1\over {2{\pi}i}}\oint_{\Gamma_1}
({{{\bold \Pi}}\over{\zeta - 1}}+{\bold E}(\zeta)){\bold P}^{(1)}
({{{\bold \Pi}}\over{\zeta - 1}}+{\bold E}(\zeta)) {\rm d}\zeta \\
& = & {\bold \Pi}{\bold P}^{(1)}{\bold \Pi} {1\over
{2{\pi}i}}\oint_{\Gamma_1} {1\over{(\zeta - 1)^2}} {\rm d}\zeta +
{1\over {2{\pi}i}}\oint_{\Gamma_1} {\bold \Pi}{\bold P}^{(1)}
{{\bold E}(\zeta)\over{\zeta - 1}} {\rm d}\zeta
\\& & \quad
+ {1\over {2{\pi}i}}\oint_{\Gamma_1} {{\bold E}(\zeta)\over{\zeta
- 1}}{\bold P}^{(1)} {\bold \Pi} {\rm d}\zeta ={\bold \Pi}{\bold
P}^{(1)}{\bold E} + {\bold E}{\bold P}^{(1)}{\bold \Pi}
\end{eqnarray*}
by Cauchy's integral formula.
For $1 < m\leq k$ arguments are similar. We have
to replace every ${\bold R}(\zeta)$ by
${{{\bold \Pi}}\over{\zeta - 1}}+{\bold E}(\zeta)$ in
\[ \hat{\bold P}_1^{(m)} =
{{m!}\over{2\pi i}} \oint_{\Gamma_1}
\sum_{\nu_1+\nu_2+\ldots+\nu_l=m}\!\!\!\!\!\!\! {\bold
R}(\zeta){{{\bold P}^{(\nu_1)}}\over{\nu_1!}} {\bold
R}(\zeta){{{\bold P}^{(\nu_2)}}\over{\nu_2!}} \cdots {\bold
R}(\zeta){{{\bold P}^{(\nu_l)}}\over{\nu_l!}} {\bold R}(\zeta){\rm
d}\zeta,\,\,\nu_k\geq 1.\]
\end{proof}

Now, we are in a position to represent the principal eigenvalue
of the characteristic operator in a power series.
\begin{lemma}\quad
\label{al13}
If a primitive operator ${\bf P}$ satisfies (\ref{emac}) then
\[ \lambda(\theta) = 1 + {{(i\theta)}\over{1!}}\mu_1 +
{{(i\theta)^2}\over{2!}}\mu_2 +
{{(i\theta)^3}\over{3!}}\mu_3 +\cdots+
{{(i\theta)^k}\over{k!}}\mu_k + o(\vert\theta\vert^k). \]
\end{lemma}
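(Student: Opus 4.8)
The plan is to recover the scalar $\lambda(\theta)$ from the eigen-identity $\hat{\bold P}(\theta)\hat{\bold P}_1(\theta) = \lambda(\theta)\hat{\bold P}_1(\theta)$, which holds for $\vert\theta\vert<\xi$ because $\lambda(\theta)$ is the only (simple) eigenvalue enclosed by $\Gamma_1$ and $\hat{\bold P}_1(\theta)$ is the corresponding Riesz projection (Lemma \ref{al8}). The idea is to pair this identity with a fixed test vector and a bounded linear functional, so as to turn the operator relation into a quotient of two \emph{scalar} functions of $\theta$. Since ${\bold \Pi}$ is the rank-one projection $g\mapsto\psi\,{\rm E}_{\pi}[g]$, its left eigenfunctional is ${\rm E}_{\pi}[\,\cdot\,]$, which is bounded on $L^{\infty}(\mu)$ (it is a factor of the bounded operator ${\bold \Pi}$) and satisfies ${\rm E}_{\pi}\circ{\bold \Pi}={\rm E}_{\pi}$ because ${\rm E}_{\pi}[\psi]=1$. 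Taking the constant test function $g_0\equiv 1$ and applying ${\rm E}_{\pi}[\,\cdot\,]$ to both sides of the eigen-identity evaluated at $g_0$ gives
\[ \lambda(\theta) = \frac{{\rm E}_{\pi}[\hat{\bold P}(\theta)\hat{\bold P}_1(\theta)g_0]}{{\rm E}_{\pi}[\hat{\bold P}_1(\theta)g_0]},\qquad \vert\theta\vert<\xi. \]
It therefore suffices to show that numerator and denominator admit MacLaurin expansions up to order $k$ with $o(\vert\theta\vert^k)$ remainder and that the denominator stays bounded away from $0$ near the origin.

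For the denominator I would use Lemma \ref{al6}, which supplies the norm expansion $\hat{\bold P}_1(\theta)=\sum_{m=0}^k\frac{(i\theta)^m}{m!}\hat{\bold P}_1^{(m)}+o(\vert\theta\vert^k)$. Applying the fixed vector $g_0$ and then the bounded functional ${\rm E}_{\pi}$ collapses this to a scalar expansion, since for the operator remainder $R=o(\vert\theta\vert^k)$ one has $\vert{\rm E}_{\pi}[Rg_0]\vert\leq\bigl\bracevert Rg_0\bigl\bracevert\leq\Vert R\Vert\,\bigl\bracevert g_0\bigl\bracevert=o(\vert\theta\vert^k)$. For the numerator I would first multiply the operator expansions: inserting (\ref{emac}) for $\hat{\bold P}(\theta)$ and Lemma \ref{al6} for $\hat{\bold P}_1(\theta)$ and collecting powers of $i\theta$ up to order $k$ yields $\hat{\bold P}(\theta)\hat{\bold P}_1(\theta)=\sum_{m=0}^k\frac{(i\theta)^m}{m!}C_m+o(\vert\theta\vert^k)$ with $C_m=\sum_{j=0}^m\frac{m!}{j!\,(m-j)!}{\bold P}^{(j)}\hat{\bold P}_1^{(m-j)}$; the cross terms pairing a genuine power with an $o(\vert\theta\vert^k)$ remainder are themselves $o(\vert\theta\vert^k)$ because every coefficient operator and every remainder is norm bounded. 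Pairing again with $g_0$ and ${\rm E}_{\pi}$ produces the scalar expansion of the numerator.

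Finally the denominator equals ${\rm E}_{\pi}[{\bold \Pi}g_0]+o(1)={\rm E}_{\pi}[g_0]+o(1)=1+o(1)$, hence is nonzero for small $\vert\theta\vert$; a scalar function carrying an order-$k$ expansion divided by another one with nonvanishing value at $0$ again carries an order-$k$ expansion, the reciprocal being expanded through the geometric series and the product rule for truncated power series. This delivers $\lambda(\theta)=\sum_{m=0}^k\frac{(i\theta)^m}{m!}\mu_m+o(\vert\theta\vert^k)$, and matching constant terms forces $\mu_0=\lambda(0)=1$, which is exactly the claimed expansion. An equivalent and perhaps slicker route is to start from the Riesz formula $\lambda(\theta)\hat{\bold P}_1(\theta)=\frac{1}{2\pi i}\oint_{\Gamma_1}\zeta\,\hat{\bold R}(\zeta,\theta)\,{\rm d}\zeta$ and to substitute the resolvent series already obtained in the proof of Lemma \ref{al6}, integrating term by term. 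The one place that genuinely needs care---and hence the main obstacle---is the bookkeeping of the $o(\vert\theta\vert^k)$ remainders under the operator product and the scalar quotient, together with the verification that the denominator never vanishes; the rest is the routine algebra of multiplying and inverting truncated power series.
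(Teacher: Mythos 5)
Your proposal is correct and takes essentially the same route as the paper: the paper also scalarizes the eigen-identity $\hat{\bold P}(\theta)\hat{\bold P}_1(\theta)=\lambda(\theta)\hat{\bold P}_1(\theta)$ by pairing it with the stationary functional $\pi={\rm E}_{\pi}[\,\cdot\,]$ and a fixed vector (it uses $\psi$ where you use $g_0\equiv 1$), then expands both sides via (\ref{emac}) and Lemma \ref{al6}. The only difference is bookkeeping: you solve for $\lambda(\theta)$ by dividing the two truncated series, while the paper equates coefficients and extracts $\mu_m$ recursively via the Leibniz formula, which is the same algebra.
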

\begin{proof} {\sc Lemma \ref{al13}}

It follows from (\ref{aae8}) that
\begin{equation}
\label{epi1} \pi\hat{\mathbf P}(\theta)\hat{\mathbf
P}_1(\theta)\psi =\lambda(\theta)\pi\hat{\mathbf P}_1(\theta)\psi.
\end{equation}
Denote $\hat\lambda^{(k)}=\pi\hat{\mathbf P}_1^{(k)}\psi.$
By virtue of (\ref{emac}) and Lemma \ref{al6}
\[ \lambda(\theta)\sum_{\nu=0}^k
\hat\lambda^{(\nu)}{{(i\theta)^{\nu}}\over{\nu!}} =
\sum_{m=0}^k
\Bigl(\sum_{\nu=0}^m {{m}\choose{\nu}}
\pi{\mathbf P}^{(\nu)}\hat{\mathbf P}_1^{(m-\nu)}\psi
{{(i\theta)^m}\over{m!}}\Bigl)
 + o(\vert\theta\vert^k). \]
Since $\lambda^{(0)}=\hat\lambda^{(0)}=1,$
$\lambda^{(1)}=\hat\lambda^{(1)}=0,$
and $\lambda^{(k)}$ exists so by the Leibniz formula
\begin{equation}
\label{encor}
\lambda^{(m)}= \mu_m =
\sum_{\nu=1}^m {{m}\choose{\nu}}
\pi{\mathbf P}^{(\nu)}\hat{\mathbf P}_1^{(m-\nu)}\psi
- \sum_{\nu=2}^{m-2} {{m}\choose{\nu}}
\lambda^{(\nu)}\hat\lambda^{(m-\nu)}.
\end{equation}
By (\ref{encor}) $\gamma_2=\lambda^{(2)}$,
for $m>2$ also use the equation (1.13) in Petrov (1996).
\end{proof}

The following theorem is the main result of the present Section.
\begin{theorem}\quad
\label{al3}
If a primitive operator ${\bold P}$ satisfies (\ref{emac}),
$k\geq 3$ and $\sigma^2>0$ then
there exists ${\eta}_k>0$ such that for $T_n = \eta_k{\sigma}\sqrt{n}$
and $\mid\theta\mid \leq T_n$ we have
\begin{eqnarray}
\label{ae7}
\lefteqn {
\Vert \hat{{\bold P}}^n({{\theta}\over{{\sigma}\sqrt{n}}})
- e^{-{{\theta^2}\over{2}}}
\Bigl(
\sum_{m=0}^{k-2}
\sum_{j=0}^{m} {{(i\theta)^j}\over{n^{{m}\over{2}}j!\sigma^j}}
{\frak P}_{m-j}(i\theta) \hat{\bold P}_1^{(j)}
\Bigl)
- ({\bold P}^n - {\bold \Pi})\Vert }   \\
& & \qquad\qquad\qquad\leq
{{o(1)}\over{n^{{k-2}\over{2}}}}(\vert\theta\vert^{k-2}
+ \vert\theta\vert^{k-1} + \vert\theta\vert^{k} + \vert\theta\vert^{3(k-2)})
e^{-{{\theta^2}\over{4}}} + O({{\vert\theta\vert}\over{\sqrt{n}}}
\kappa^n), \nonumber
\end{eqnarray}
where
\[ {\frak P}_\nu(i\theta) =
\sum_{(k_1,k_2,\ldots,k_{\nu})\in {\cal K}_{\nu}}
\prod_{m=1}^{\nu} {{1}\over{k_m!}}
\left({{\gamma_{m+2}(i\theta)^{m+2}}\over{(m+2)!\sigma^{m+2}}}\right)^{k_m}
\!\!\!\!\!\!.\]
\end{theorem}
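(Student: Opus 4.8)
The plan is to start from the spectral decomposition \eqref{aae8} furnished by Lemma \ref{al8}, evaluated at the rescaled argument $\theta/(\sigma\sqrt{n})$. For $|\theta|\le T_n=\eta_k\sigma\sqrt{n}$ the scaled argument satisfies $|\theta/(\sigma\sqrt{n})|<\xi$ once $\eta_k$ is chosen below $\xi/\sigma$, so Lemma \ref{al8} applies and gives
\[
\hat{\bold P}^n\Bigl(\tfrac{\theta}{\sigma\sqrt{n}}\Bigr)
= \lambda^n\Bigl(\tfrac{\theta}{\sigma\sqrt{n}}\Bigr)\,
\hat{\bold P}_1\Bigl(\tfrac{\theta}{\sigma\sqrt{n}}\Bigr)
+ \hat{\bold Q}_n\Bigl(\tfrac{\theta}{\sigma\sqrt{n}}\Bigr)
+ ({\bold P}^n-{\bold \Pi}).
\]
The remainder term $({\bold P}^n-{\bold\Pi})$ matches the corresponding term on the left of \eqref{ae7} exactly, and $\hat{\bold Q}_n$ is controlled by the Lemma \ref{al8} bound $\Vert\hat{\bold Q}_n(\theta)\Vert=O(\kappa^n|\theta|)$; after rescaling this contributes $O(|\theta|\,\kappa^n/\sqrt{n})$, which is precisely the last error term in \eqref{ae7}. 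So the entire analytic content reduces to approximating the leading piece $\lambda^n(\theta/(\sigma\sqrt{n}))\,\hat{\bold P}_1(\theta/(\sigma\sqrt{n}))$.

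Next I would expand the two factors separately and multiply. For the eigenvalue, Lemma \ref{al13} combined with $\gamma_1=0$, $\gamma_2=\sigma^2$ gives $\ln\lambda(\theta)=-\tfrac12\sigma^2\theta^2+\sum_{m\ge3}\gamma_m(i\theta)^m/m!+o(|\theta|^k)$; hence
\[
\lambda^n\Bigl(\tfrac{\theta}{\sigma\sqrt{n}}\Bigr)
= \exp\Bigl(n\ln\lambda\bigl(\tfrac{\theta}{\sigma\sqrt{n}}\bigr)\Bigr)
= e^{-\theta^2/2}\exp\Bigl(\sum_{m\ge3}\tfrac{\gamma_m}{m!}\tfrac{(i\theta)^m}{\sigma^m n^{(m-2)/2}}+o(\cdots)\Bigr).
\]
Writing $m=\mu+2$ exhibits the bracketed sum as $\sum_{\mu\ge1} n^{-\mu/2}\gamma_{\mu+2}(i\theta)^{\mu+2}/((\mu+2)!\sigma^{\mu+2})$, and expanding the exponential as a power series in $n^{-1/2}$ produces exactly the polynomials ${\frak P}_\nu(i\theta)$ through the multinomial partition set ${\cal K}_\nu$: the coefficient of $n^{-\nu/2}$ is ${\frak P}_\nu(i\theta)$ by definition. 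Simultaneously, Lemma \ref{al6} gives $\hat{\bold P}_1(\theta/(\sigma\sqrt{n}))=\sum_{j\ge0} n^{-j/2}(i\theta)^j\hat{\bold P}_1^{(j)}/(j!\sigma^j)+o(\cdots)$. Multiplying the two expansions and collecting the total power $n^{-m/2}$ via $m=\nu+j$ yields the operator $e^{-\theta^2/2}\sum_{m=0}^{k-2}\sum_{j=0}^{m} n^{-m/2}(i\theta)^j\,{\frak P}_{m-j}(i\theta)\,\hat{\bold P}_1^{(j)}/(j!\sigma^j)$, which is the asserted main term.

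The hard part will be the remainder bookkeeping, i.e.\ showing that all the terms discarded in the three expansions (the $\lambda^n$ exponential, the projector $\hat{\bold P}_1$, and their product) collapse into the claimed right-hand side of \eqref{ae7}. There are two distinct difficulties here. First, the $o(|\theta|^k)$ error inside $\ln\lambda$ gets multiplied by $n$ and then exponentiated, so I must argue on the range $|\theta|\le\eta_k\sigma\sqrt{n}$ that $n\cdot o((|\theta|/(\sigma\sqrt{n}))^k)=o(n^{-(k-2)/2})|\theta|^k$ and that the surviving exponential factor can be dominated by $e^{-\theta^2/4}$ after absorbing the polynomial correction into the Gaussian; choosing $\eta_k$ small enough guarantees $\mathrm{Re}\,(n\ln\lambda)\le-\theta^2/4$ uniformly, which is what legitimizes replacing $e^{-\theta^2/2}$ by $e^{-\theta^2/4}$ in the error bound. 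Second, when the truncated exponential series is multiplied by the truncated projector series, the cross terms of total order exceeding $k-2$ generate the four distinct polynomial growth rates $|\theta|^{k-2},|\theta|^{k-1},|\theta|^{k},|\theta|^{3(k-2)}$ appearing in \eqref{ae7}; the factor $|\theta|^{3(k-2)}$ in particular comes from the highest-order self-products of the cubic-and-beyond cumulant terms in the exponential expansion, and tracking its exact exponent is the most delicate estimate. I would handle this by a careful Taylor-with-remainder argument for each factor separately, keeping explicit the first omitted order, and then invoke the uniform Gaussian domination to convert every polynomial-times-$e^{-\theta^2/2}$ remainder into the stated $e^{-\theta^2/4}$ bound.
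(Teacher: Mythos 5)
Your proposal follows the paper's own proof essentially step by step: the spectral decomposition of Lemma \ref{al8} at the rescaled argument (with $({\bold P}^n-{\bold \Pi})$ matching exactly and $\hat{\bold Q}_n$ giving the $O(|\theta|\kappa^n/\sqrt{n})$ term), the expansion of $n\ln\lambda$ via Lemma \ref{al13} and of $\hat{\bold P}_1$ via Lemma \ref{al6}, the collection of powers of $n^{-1/2}$ into the polynomials ${\frak P}_\nu(i\theta)$, and the domination of the surviving exponential by $e^{-\theta^2/4}$ for $\eta_k$ small. The remainder bookkeeping you defer is carried out in the paper exactly as you describe it (Taylor with integral remainder for $\ln\lambda$, the inequality $\vert e^x-1\vert\leq\vert x\vert e^{\vert x\vert}$, and a quantitative choice of $\eta_k$ making the real part of the exponent at most $-\theta^2/4$), so the approach is the same.
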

\begin{proof} {\sc Theorem \ref{al3}}

Let $0<\eta_3\leq\xi$ be such that
$\sup_{\vert\theta\vert\leq\eta_3}
\vert \lambda^{(3)}(\theta) - {\mu}_3 \vert
\leq \sigma^3. $
Put, for short
$T_n = \min\{{{{\sigma}^2}
\over{5({3\over{2}}\vert{\mu}_3\vert+{\sigma}^3)}}, {\eta_3} \}
{\sigma}\sqrt{n}. $
By Taylor's formula for $\mid\theta\mid \leq T_n$ we have
\begin{eqnarray*}
\vert \lambda({{\theta}\over{\sigma\sqrt{n}}})\vert
& \geq &  1 - {{\theta^2}\over{2n}}
- {{\vert\theta\vert^3(\vert\mu_3\vert + \sigma^3 + {1\over{2}}
\vert\mu_3\vert)}\over
{6n^{3\over2}\sigma^3}}
\geq
1 - {{T_n^2}\over{2n}}
- {{T_n^3({3\over{2}}\vert\mu_3\vert + \sigma^3)}\over
{6n^{3\over2}\sigma^3}} \\
& \geq &
1 - {{\sigma^6}\over{50({3\over{2}}\vert\mu_3\vert + \sigma^3)^2}}
- {{\sigma^6}\over{6\cdot 125({3\over{2}}\vert\mu_3\vert
+ \sigma^3)^2}}
> 1 - {2\over50}  = {24\over25}\cdotp
\end{eqnarray*}
Hence for $\mid\theta\mid \leq T_n$ by Taylor's formula and Lemma \ref{al13}
\begin{eqnarray}
\label{aae10}
n\ln\lambda({{\theta}\over{\sigma\sqrt{n}}})
& = &  - {{\theta^2}\over{2}} +
{{(i\theta)^3\mu_3}\over{6\sqrt{n}\sigma^3}}
+ {{(i\theta)^4\mu_4}\over{24n\sigma^4}}
- {{(i\theta)^4}\over{8n}}  + \cdots \\
& & + {{(i\theta)^k}\over{k!n^{{k-2}\over{2}}\sigma^k}}\gamma_k +
{{(i\theta)^k}\over{(k-1)!n^{{k-2}\over{2}}\sigma^k}} \int_0^1
(1-x)^{k-1} W_k({{x\theta}\over{\sigma\sqrt{n}}}){\rm d}x,
\nonumber
\end{eqnarray}
where
$W_k(x) = {{{\partial}^k}\over{\partial y^k}}\ln\lambda(y)\big
\vert_{y=x} - \gamma_k. $
Further, it is evident that we can insert $\eta_k\leq\eta_3$ in
$T_n =
\min\{{{{\sigma}^2}
\over{5({3\over{2}}\vert{\mu}_3\vert+{\sigma}^3)}}, {\eta_k} \}
{\sigma}\sqrt{n},$
such that for  $\vert\theta\vert\leq T_n$ we have
\[
6\sigma^2
({{\vert\theta\vert^2}\over{24n\sigma^4}}\vert\gamma_4\vert +
\cdots +
{{\vert\theta\vert^{k-2}}\over{k!n^{{k-2}\over{2}}\sigma^k}}
(\vert\gamma_k\vert+c_k)) < 7, \] where $c_k = \sup_{x\in [0,1]}
\vert W_k(x)\vert$. Since
$W_k({{x\theta}\over{\sigma\sqrt{n}}})\rightarrow_n 0,$  so by the
Lebesgue dominated convergence theorem we get $\int_0^1
(1-x)^{k-1} W_k({{x\theta}\over{\sigma\sqrt{n}}}){\rm d}x = o(1).$
By virtue of (\ref{aae8}), Lemma \ref{al8} and (\ref{aae10}) we
obtain
\begin{eqnarray*}
\lefteqn {
\Vert \hat{{\bold P}}^n({{\theta}\over{\sigma\sqrt{n}}})
 - e^{ - {{\theta^2}\over{2}} +
{{(i\theta)^3\mu_3}\over{6\sqrt{n}\sigma^3}}
+ {{(i\theta)^4\mu_4}\over{24n\sigma^4}}
- {{(i\theta)^4}\over{8n}} +
\cdots + {{(i\theta)^k}\over{k!n^{{k-2}\over{2}}\sigma^k}}\gamma_k }
\hat{\bold P}_1({{\theta}\over{\sigma\sqrt{n}}})
-({\bold P}^n - {\bold \Pi})\Vert }
\\& & \leq
 e^{ - {{\theta^2}\over{2}} +
\cdots +
{{(i\theta)^k}\over{k!n^{{k-2}\over{2}}\sigma^k}}\gamma_k}
\left\vert \exp\{ {{(i\theta)^k\int_0^1 (1-x)^{k-1}
W_k({{x\theta}\over{\sigma\sqrt{n}}}){\rm d}x }
\over{(k-1)!n^{{k-2}\over{2}}\sigma^k}} \} - 1\right\vert O(1) \\
& & \qquad
+\, O({{\vert\theta\vert}\over{\sigma\sqrt{n}}}\kappa^n).
\end{eqnarray*}
By (\ref{aae10}) and the inequality
$ \vert e^x - 1\vert \leq \vert  x\vert e^{\vert x\vert},$
we find that for $\mid\theta\mid\leq T_n$ we have
\[
\big\vert \exp\{
{{(i\theta)^k}\over{(k-1)!n^{{k-2}\over{2}}\sigma^k}}\!\int_0^1\!
(1-x)^{k-1} W_k({{x\theta}\over{\sigma\sqrt{n}}}) {\rm d}x\} -
1\big\vert \leq
{{o(1)\vert\theta\vert^k}\over{n^{{k-2}\over{2}}}}
\exp\{{{c_k\vert\theta\vert^k}\over{k!n^{{k-2}\over{2}}\sigma^k}}\}\cdotp
\]
Hence,
\begin{eqnarray*}
\lefteqn {
\Vert \hat{{\bold P}}^n({{\theta}\over{\sigma\sqrt{n}}})
 - e^{ - {{\theta^2}\over{2}} +
{{(i\theta)^3\mu_3}\over{6\sqrt{n}\sigma^3}}
+ {{(i\theta)^4\mu_4}\over{24n\sigma^4}}
- {{(i\theta)^4}\over{8n}} + \cdots
+ {{(i\theta)^k}\over{k!n^{{k-2}\over{2}}\sigma^k}}\gamma_k}
\hat{\bold P}_1({{\theta}\over{\sigma\sqrt{n}}})
-({\bold P}^n - {\bold \Pi})
\Vert } \\
& & \leq\exp\{ - {{\theta^2}\over{2}}
 + \cdots
+ {{(i\theta)^k}\over{k!n^{{k-2}\over{2}}\sigma^k}}(\vert\gamma_k\vert + c_k)\}
 {{\vert\theta\vert^k}\over{n^{{k-2}\over{2}}}}o(1)
+ O({{\vert\theta\vert}\over{\sigma\sqrt{n}}}\kappa^n) \\
& & \leq
\exp\{ - {{\theta^2}\over{2}} + {{\theta^2}\over{2}}(
{1\over15}{{{3\over{2}}\vert\mu_3}\vert\over
{{3\over{2}}\vert\mu_3\vert + \sigma^3}} +
{7\over15}{{\sigma^3}\over{{3\over{2}}\vert\mu_3\vert
+ \sigma^3}}) \}
{{\vert\theta\vert^k}\over{n^{{k-2}\over{2}}}}o(1)
+ O({{\vert\theta\vert}\over{\sigma\sqrt{n}}}
\kappa^n) \\
& & \leq
o(1){{\vert\theta\vert^k}\over{n^{{k-2}\over{2}}}}
\exp\{ - {{\theta^2}\over{4}} \}
+ O({{\vert\theta\vert}\over{\sigma\sqrt{n}}}
\kappa^n).
\end{eqnarray*}
Thus expanding
$\exp\{{{(i\theta)^3\mu_3}\over{6\sqrt{n}\sigma^3}}
+ \cdots
+ {{(i\theta)^k}\over{k!n^{{k-2}\over{2}}\sigma^k}}\gamma_k\}$
and using Lemma \ref{al6} and Taylor's formula
for $\hat{\bold P}_1({{\theta}\over{\sigma\sqrt{n}}})$
we obtain (\ref{ae7}).
\end{proof}

The following lemma provides an estimate for the
iterates of characteristic operator (for
the proof see Lemma 1.5 in Nagaev (1961)).
\begin{lemma}\quad
\label{lna2}
Let Condition $(\Psi)$ is fulfilled.
Then for $n\geq 1$ and
$\bigl\bracevert g\bigl\bracevert\leq 1$
\[
\bigl\bracevert \hat{\bold P}^n(\theta)g\bigl\bracevert
\leq \left(\sqrt{1-{{\alpha^4}\over{2\beta}}
(1-\vert{\widehat\mu_f}(\theta)\vert^2)}\right)^{n-1}
\!\!\!\!\!\!\!\!\!. \]
\end{lemma}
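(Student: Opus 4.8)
{\sc Lemma \ref{lna2}}

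The plan is to reduce the claimed estimate to a single strict contraction valid on the range of $\hat{\bold P}(\theta)$ and then to iterate it $n-1$ times. First I would use Condition $(\Psi)$ to pass to a density: since $\alpha\mu(A)\le P(x,A)\le\beta\mu(A)$ for every Borel set $A$, the measure $P(x,\cdot)$ is absolutely continuous with respect to $\mu$ with a density $p(x,y)$ satisfying $\alpha\le p(x,y)\le\beta$ for $\mu$-a.a. $x,y$. Because $|e^{i\theta f}|=1$ and $P(x,\cdot)$ is a probability measure, one has $\|\hat{\bold P}(\theta)h\|\le\|h\|$ for every $h$, so the very first application of $\hat{\bold P}(\theta)$ to $g$ is controlled trivially by $\|g\|\le1$; the entire content is to gain a factor $q=\sqrt{1-\frac{\alpha^4}{2\beta}(1-|\widehat\mu_f(\theta)|^2)}$ at each of the remaining $n-1$ steps.

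For one such step, set $u=\hat{\bold P}^{k-1}(\theta)g$ and $v=\hat{\bold P}(\theta)u$, and fix $x$. Applying the elementary variance identity $\int|\phi|^2\,{\rm d}\nu-|\int\phi\,{\rm d}\nu|^2=\tfrac12\iint|\phi(y)-\phi(y')|^2\,{\rm d}\nu(y)\,{\rm d}\nu(y')$, valid for any probability measure $\nu$, with $\nu=P(x,\cdot)$ and $\phi=e^{i\theta f}u$, I obtain
\[ |v(x)|^2 = ({\bold P}|u|^2)(x) - \frac{1}{2}\iint \bigl| e^{i\theta f(y)}u(y) - e^{i\theta f(y')}u(y') \bigr|^2 p(x,y)p(x,y')\,\mu({\rm d}y)\,\mu({\rm d}y'). \]
Bounding $({\bold P}|u|^2)(x)\le\|u\|^2$ from above and $p(x,y)p(x,y')\ge\alpha^2$ from below reduces matters, uniformly in $x$, to the inequality $|v(x)|^2\le\|u\|^2-\alpha^2 V_\mu(u)$, where $V_\mu(u)=\int|u|^2\,{\rm d}\mu-\bigl|\int e^{i\theta f}u\,{\rm d}\mu\bigr|^2$ is the variance of $e^{i\theta f}u$ under $\mu$.

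The characteristic-function factor $1-|\widehat\mu_f(\theta)|^2$ is then produced by exploiting that $u$ lies in the range of $\hat{\bold P}(\theta)$. Writing $u=\hat{\bold P}(\theta)\tilde u$ and splitting $p=\alpha+(p-\alpha)$ with $p-\alpha\ge0$, I get $u(y)=\alpha c+\rho(y)$, where $c=\int e^{i\theta f}\tilde u\,{\rm d}\mu$ is constant in $y$ and $\|\rho\|\le(1-\alpha)\|\tilde u\|$. The constant part contributes to $V_\mu(u)$ the quantity $\alpha^2|c|^2$ times the $\mu$-variance of $e^{i\theta f}$, which by the same identity equals $1-|\widehat\mu_f(\theta)|^2$; the upper bound $p\le\beta$ is what allows one to control the $L^\infty$ size $\|u\|$ by an $L^1(\mu)$-quantity of $\tilde u$ and thereby to recover $V_\mu(u)\ge\frac{\alpha^2}{2\beta}(1-|\widehat\mu_f(\theta)|^2)\|u\|^2$. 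Substituting gives $|v(x)|^2\le q^2\|u\|^2$, and iterating this single-step bound from $u_1=\hat{\bold P}(\theta)g$ (with $\|u_1\|\le1$) up to $u_n$ yields exactly the power $q^{n-1}$.

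The main obstacle is the third step: tracking the cross terms between the constant part $\alpha c$ and the remainder $\rho$ in $V_\mu(u)$, and arranging the constants so that they combine to precisely $\frac{\alpha^4}{2\beta}$ — in particular producing the lower bound on $|c|$, equivalently on $\int|u|^2\,{\rm d}\mu$, relative to $\|u\|^2$ through the two-sided bound $\alpha\le p\le\beta$, uniformly in $x$. This is exactly the bookkeeping carried out in Nagaev (1961, Lemma 1.5), to which the detailed computation can be deferred.
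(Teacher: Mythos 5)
Your overall plan (one--step variance identity, a lower bound on the resulting variance via Condition $(\Psi)$, then iteration of a single--step contraction) does not work, and the gap is not the ``bookkeeping'' you defer to Nagaev: the single--step contraction you propose to iterate is false. You claim that for every $u$ in the range of $\hat{\bold P}(\theta)$ one has $V_\mu(u)\geq \frac{\alpha^2}{2\beta}(1-\vert\widehat\mu_f(\theta)\vert^2)\Vert u\Vert^2$ and hence $\Vert\hat{\bold P}(\theta)u\Vert\leq q\Vert u\Vert$ with $q=\sqrt{1-\frac{\alpha^4}{2\beta}(1-\vert\widehat\mu_f(\theta)\vert^2)}$ (norms being essential suprema). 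Take ${\Bbb S}=\{1,2\}$, $\mu=(\frac12,\frac12)$, $e^{i\theta f}=(1,-1)$ so that $\widehat\mu_f(\theta)=0$, and the density $p(1,1)=p(2,2)=1+\varepsilon$, $p(1,2)=p(2,1)=1-\varepsilon$, so $\alpha=1-\varepsilon$, $\beta=1+\varepsilon$. Then $u=\hat{\bold P}(\theta){\bf 1}=(\varepsilon,-\varepsilon)$ lies in the range, $e^{i\theta f}u\equiv\varepsilon$ is constant, so $V_\mu(u)=0$, and indeed $\hat{\bold P}(\theta)u=(\varepsilon,\varepsilon)$ has the same sup--norm as $u$: no contraction occurs at this step although $q<1$. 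The point is that the oscillation of $u$ can exactly cancel that of $e^{i\theta f}$, so the factor $1-\vert\widehat\mu_f(\theta)\vert^2$ cannot be extracted from the variance of $e^{i\theta f}u$; your decomposition $u=\alpha c+\rho$ cannot repair this because $c=\int e^{i\theta f}\tilde u\,{\rm d}\mu$ may vanish while $\rho$ dominates.

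The paper itself gives no argument here --- it refers entirely to Lemma 1.5 of Nagaev (1961) --- but that proof is a {\em two--step} estimate, not the one--step scheme you describe. One represents $\hat{\bold P}^2(\theta)$ by the kernel $k_x(z)=\int e^{i\theta f(y)}p(x,y)p(y,z)\mu({\rm d}y)$ and applies your variance identity in the {\em intermediate} variable $y$, with respect to the probability measure $p(x,y)p(y,z)\mu({\rm d}y)/p_2(x,z)$, where $p_2(x,z)=\int p(x,y)p(y,z)\mu({\rm d}y)$; the function whose variance is taken is then $e^{i\theta f}$ itself rather than $e^{i\theta f}u$. Using $p(x,y)p(y,z)p(x,y')p(y',z)\geq\alpha^4$, $p_2(x,z)\leq\beta$ and $\sqrt{1-t}\leq 1-t/2$ one gets $\vert k_x(z)\vert\leq p_2(x,z)-\frac{\alpha^4}{2\beta}(1-\vert\widehat\mu_f(\theta)\vert^2)$, and integration in $z$ yields $M_{n+2}\leq q^2M_n$ for $M_n=\Vert\hat{\bold P}^n(\theta)g\Vert$, whence $M_n\leq q^{2\lfloor n/2\rfloor}\leq q^{n-1}$. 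This is where all four powers of $\alpha$ and the single power of $\beta$ come from; and in the example above the gain indeed occurs only at every second step, consistent with the two--step bound and inconsistent with yours.
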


\section{ Proofs }
\begin{proof} {\sc Theorems \ref{tn2} and \ref{t4}}

By virtue of Condition $(\Psi)$ and \S1 in Nagaev (1957)
${\bold P}$ is primitive in $L^{\infty}(\mu)$
(alternatively one can use Proposition 3.13 in Wu (2000)).
Moreover, it follows also that
\[ \lim_{L\to\infty}\bigl\bracevert
\int_{\vert f(y)\vert >L} \vert f(y)\vert^k P(x,{\rm d}y)
\bigl\bracevert \leq \beta\lim_{L\to\infty} \int_{\vert f(y)\vert
>L} \vert f(y)\vert^k \mu({\rm d}y) = 0
\]
so that (\ref{ena1}) holds.
Write
$F_{gn}(z) = F_{g(\cdot),n}(z) =
({\bf K}_{n,z}g)(\cdot),$ and
${G_{gn}}(z) = \sum_{m=0}^{k-2} n^{-{{m}\over{2}}} {\bf A}_{m,z}g$.
Let $K_{gn}(z)$ be the distribution function that assigns the
mass $({\bold P}^n - {\bold \Pi})g(\cdot)$ at $0$.
Put,
\[  H_{gn}(z) = G_{gn}(z) + K_{gn}(z),\>\,
\hat H_{gn}(\theta) = \int e^{i\theta x}{\rm d}H_{gn}(x), \>\,
\hat F_{gn}(\theta) = \int e^{i\theta x}{\rm d}F_{gn}(x).
\]
Note that
$\hat F_{gn}(\theta) = \hat{\bold P}^n({{\theta}\over{\sigma\sqrt{n}}})(g)$
and
\[\hat G_{gn}(\theta) =
\int e^{i\theta x}{\rm d}G_{gn}(x) = e^{-{{\theta^2}\over{2}}}
\Bigl(\sum_{m=0}^{k-2}{{1}\over{(\sqrt{n})^m}}\sum_{j=0}^{m}
{{1}\over{j!}} \big({{i\theta}\over{\sigma}}\big)^j{\frak
P}_{m-j}(i\theta) \hat{\bold P}_1^{(j)}g(\cdot) \Bigl)\cdot\]
Because of
\[ \bigl\bracevert H_{gn}(z+y) - H_{gn}(z)\bigl\bracevert \leq
\bigl\bracevert  G_{gn}(z+y) - G_{gn}(z)\bigl\bracevert
+ \bigl\bracevert ({\bold P}^n - {\bold \Pi})g\bigl\bracevert, \]
\[ G_{gn}(z+y) - G_{gn}(z)
= y{{\partial }\over{\partial z}}G_{gn}(z) + {\rm sgn}(y)
\int\limits_{{{y-\vert y\vert}\over{2}}}^{{{y+\vert
y\vert}\over{2}}} ({{\partial}\over{\partial u}} G_{gn}(z+u) -
{{\partial }\over{\partial z}}G_{gn}(z)){\rm d}u \]
thus in view
of Th. 5.3 on pp. 146--147 in Petrov (1996)
and (\ref{ae81}) we have
\begin{eqnarray}
\label{ae41}
\bigl\bracevert F_{gn}(z) - G_{gn}(z)\bigl\bracevert
& \leq & \bigl\bracevert F_{gn}(z) - G_{gn}(z)
- K_{gn}(z) \bigl\bracevert  +\,  C\vert\gamma\vert^n
\bigl\bracevert g\bigl\bracevert \nonumber \\
&  \leq  &
{1\over{\pi}}\int_{\vert\theta\vert\leq T}
\bigl\bracevert\hat F_{gn}(\theta) - \hat H_{gn}(\theta)\bigl\bracevert
{{{\rm d}\theta}\over{\vert\theta\vert}} \\
& & \,
+ {3c^2({1\over{\pi}})\over{\pi T}}\sup_{z}
\bigl\bracevert{{\partial}\over{\partial z}} G_{gn}(z)\bigl\bracevert
+ C\vert\gamma\vert^n
\big(1 +  {{2c({1\over{\pi}})}\over{\pi}}\big)
\bigl\bracevert g\bigl\bracevert. \nonumber
\end{eqnarray}
Now, since
$\sup_{z} \bigl\bracevert{{\partial}\over{\partial z}}
G_{gn}(z)\bigl\bracevert$
is bounded and  $\vert\gamma\vert<1$
whence by (\ref{ae41}) for $T=n^k, k \geq 4$, we get
\begin{equation}
\label{efg} \bigl\bracevert F_{gn}(z) - G_{gn}(z)\bigl\bracevert
\leq {1\over{\pi}}\int_{\vert\theta\vert\leq n^k}
\bigl\bracevert\hat F_{gn}(\theta) - \hat
H_{gn}(\theta)\bigl\bracevert {{{\rm
d}\theta}\over{\vert\theta\vert}} + o({{\bigl\bracevert
g\bigl\bracevert}\over{n^{{k-2}\over{2}}}})\cdotp
\end{equation}
By virtue of Th. \ref{al3}
\begin{eqnarray}
\label{efh0}
\lefteqn
{ \int_{\vert\theta\vert\leq T_n}
\bigl\bracevert\hat F_{gn}(\theta) - \hat H_{gn}(\theta)\bigl\bracevert
{{{\rm d}\theta}\over{\vert\theta\vert}} } \\
& & \leq {{o(\bigl\bracevert
g\bigl\bracevert)}\over{n^{{k-2}\over{2}}}}
\int_{\vert\theta\vert\leq T_n} (\vert\theta\vert^{k-3} +
\vert\theta\vert^{k-1} + \vert\theta\vert^k +
\vert\theta\vert^{3k-7}) e^{-{{\theta^2}\over{4}}}{\rm d}\theta +
{{T_n^2}\over{\sqrt{n}}}O(\kappa^n) \nonumber\cdotp
\end{eqnarray}
This established, we have to show that
\begin{eqnarray}
\label{efh} \int_{T_n<\vert\theta\vert\leq n^k}
{{\bigl\bracevert\hat F_{gn}(\theta) - \hat
H_{gn}(\theta)\bigl\bracevert} \over{\vert\theta\vert}} {\rm
d}\theta \leq o({{\bigl\bracevert
g\bigl\bracevert}\over{n^{{k-2}\over{2}}}})\cdotp
\end{eqnarray}
For this observe that
\begin{eqnarray*}
\int_{T_n<\vert\theta\vert\leq n^k}\!\!\!\!\!\!\!\!\!
{{\bigl\bracevert\hat
G_{gn}(\theta)\bigl\bracevert}\over{\vert\theta\vert}} {\rm
d}\theta \leq 2\int_{T_n}^{\infty}\!\!\! e^{-{{\theta^2}\over{2}}}
\left\Vert \sum_{m=0}^{k-2} \sum_{j=0}^{m}
{{(i\theta)^j}\over{n^{{m}\over{2}}j!\sigma^j}} {\frak
P}_{m-j}(i\theta) \hat{\bold P}_1^{(j)} \right\Vert
\bigl\bracevert g\bigl\bracevert {{{\rm
d}\theta}\over{\vert\theta\vert}}
\end{eqnarray*}
and that by (\ref{ae81})
\begin{eqnarray*}
\int_{T_n<\vert\theta\vert\leq n^k} \bigl\bracevert \hat
H_{gn}(\theta) - \hat G_{gn}(\theta)\bigl\bracevert {{{\rm
d}\theta}\over{\vert\theta\vert}} \leq 2C_k\vert\gamma\vert^n
\bigl\bracevert g\bigl\bracevert\ln{n} = o({{\bigl\bracevert
g\bigl\bracevert}\over{n^{{k-2}\over{2}}}})\cdotp
\end{eqnarray*}
Further, by Lemma \ref{lna2} and (\ref{ecra})
there exists $\theta_0$ such that for any $\tau > \theta_0$
\[
n^{{k-2}\over{2}}\!\!\!\!\!\!
\int\limits_{T_n<\vert\theta\vert\leq n^k} \bigl\bracevert \hat
F_{gn}(\theta)\bigl\bracevert {{{\rm
d}\theta}\over{\vert\theta\vert}} \leq
n^{{k-2}\over{2}}\!\!\!\!\!\! \int\limits_{\tau\leq\vert
\theta\vert\leq n^k} \bigl\bracevert(\hat{{\bold P}}(\theta))^n
g\bigl\bracevert {{{\rm d}\theta}\over{\vert \theta\vert}} \leq
C_k n^{{k-2}\over{2}}e^{-cn} \bigl\bracevert g\bigl\bracevert
\ln{n}
\] which with the latter inequalities proves (\ref{efh}).
Consequently, the substitution of (\ref{efh0}) and (\ref{efh})
into (\ref{efg}) yields (\ref{eb1}). For the case $k=3$ set $T =
T_nr_n$ and choose a sequence $r_n\rightarrow \infty$ such that we
have
\[\int_{T_n<\vert\theta\vert\leq T_nr_n}
\bigl\bracevert \hat F_{gn}(\theta)\bigl\bracevert {{{\rm
d}\theta}\over{\vert\theta\vert}} =
\int_{T_n<\vert\theta\vert\sigma\sqrt{n}\leq T_n r_n}
\bigl\bracevert\big( \hat{{\bold P}}(\theta) \big)^n
g\bigl\bracevert {{{\rm d}\theta}\over{\vert \theta\vert}}
=\bigl\bracevert g\bigl\bracevert o(n^{-1/2}). \] This completes
the proof.
\end{proof}
\vskip 0pt\noindent
{\bf Acknowledgment.} The author thanks A. Nagaev for his comments
concerning this exposition.

\vskip 15pt\noindent
{\bf References}
\vskip 5pt\noindent
 Gnedenko, B.V., Kolmogorov, A.N., 1954.
Limit Distributions for Sums of Independent Random Variables.
Addison-Wesley, Reading, Mass.
\vskip 2pt\noindent
Gudynas, P., 2000.
Refinements of the Central Limit Theorem for Homogene\-ous Markov
Chains, in: Yu. V. Prokhorov and V. Statulevi\v cius, eds.
Limit Theorems of Probability Theory. Springer, Berlin, pp.
165--183.
\vskip 2pt\noindent
Jensen, J.L., 1991.
Saddlepoint expansions for sums of Markov dependent variables on a
continuous state space.
Probab. Theory Related Fields 89, 181--199.
\vskip 2pt\noindent
Nagaev, A.V., 2001.
An asymptotic formula for the Bayes risk in discriminating between
two Markov chains. J. Appl. Probab. 38A, 131--141.
\vskip 2pt\noindent
Nagaev, A.V., 2002.
An asymptotic formula for the Neyman--Pearson risk in
dicriminating between two Markov chains.
J. Math. Sci. 111, 3592--3600.
\vskip 2pt\noindent
Nagaev, S.V., 1957.
Some limit theorems for stationary Markov chains.
Teor. Veroyatnost. i Primenen.  2, 389--416.
\vskip 2pt\noindent
Nagaev, S.V., 1961.
More exact statements of limit theorems for homogeneous Markov chains.
Teor. Veroyatnost. i Primenen. 6, 67--86.
\vskip 2pt\noindent
Petrov, V.V., 1996.
Limit Theorems of Probability Theory. Sequences of
Indenpendent Random Variables. Oxford Studies in Probability 4,
Oxford.
\vskip 2pt\noindent
Sirazhdinov, S.H., Formanov, S.K., 1979.
Limit Theorems for Sums of Random Vectors Connected in a
Markov Chain. FAN, Tashkent.
\vskip 2pt\noindent
Szewczak, Z.S. 2005.
A remark on large deviation theorem for Markov chain with finite
number of states.
Teor. Veroyatnost. i Primenen. 50 3, 612--622.
\vskip 2pt\noindent
Wu, L.M., 2000.
Uniformly Integrable Operators and Large Deviations for Markov
Processes. J. Funct. Anal. 172, 301--376.
\end{document}